\documentclass[12pt,a4]{amsart}
\usepackage{amsmath,times,epsfig,amssymb,amsbsy,amscd,amsfonts,amstext,color,bm,mathrsfs}
 
\usepackage[margin=1in]{geometry}
\usepackage{enumerate}
\usepackage{placeins}
\usepackage{array}
\usepackage{tabulary}
\usepackage{multirow}
\usepackage{graphicx}
\usepackage{hyperref}
\usepackage{hhline}
\usepackage{soul}
\usepackage{multirow}
\usepackage[table, svgnames, dvipsnames]{xcolor}
\usepackage{makecell, cellspace, caption}
\setlength\cellspacetoplimit{3pt}
\setlength\cellspacebottomlimit{3pt}
\usepackage{subcaption}
 
\usepackage[color,matrix,arrow]{xypic}
\usepackage{tikz}
        \usetikzlibrary{arrows.meta}
        \usetikzlibrary{arrows}
        \usetikzlibrary{quotes}
        \usetikzlibrary{graphs}
        \usetikzlibrary{positioning}
\usepackage{tikz-cd}
\hypersetup{
  colorlinks   = true, 
  urlcolor     = blue, 
  linkcolor    = blue, 
  citecolor   = red  
}
\usetikzlibrary{decorations}
\usetikzlibrary{arrows}
\tikzstyle{v} = [circle, draw, inner sep=2pt, minimum size=3pt, fill=black]
\tikzstyle{l} = [rectangle, draw, rounded corners]

\theoremstyle{plain}
\newtheorem{theorem}{Theorem}[section]
\newtheorem{corollary}[theorem]{Corollary}
\newtheorem{lemma}[theorem]{Lemma}

\newtheorem{proposition}[theorem]{Proposition}

\makeatletter
\@namedef{subjclassname@2020}{%
  \textup{2020} Mathematics Subject Classification}
\makeatother

\theoremstyle{definition}
\newtheorem{definition}[theorem]{Definition}
\newtheorem{example}[theorem]{Example}

\newtheorem{notation}[theorem]{Notation}
\theoremstyle{remark}
\newtheorem{remark}[theorem]{Remark}

\newcommand{\A}{\mathcal{A}}
\newcommand{\As}{\mathscr{A}}
\newcommand{\B}{\mathsf{B}}
\newcommand{\C}{\mathbb{C}}

\newcommand{\Cca}{\mathcal{C}}

\newcommand{\Q}{\mathbb{Q}}
\newcommand{\R}{\mathbb{R}}

\newcommand{\scR}{\mathcal{R}}

\newcommand{\Z}{\mathbb{Z}}

\newcommand{\E}{{\mathcal{E}}}

\newcommand{\vn}{\noindent}
\newcommand{\asc}{{\rm asc}}
\newcommand{\dsc}{{\rm dsc}}

\newcommand{\M}{\mathcal{M}}
\newcommand{\tbf}{\textbf} 
\newcommand{\cc}{\textbf{c}}

\newcommand{\Shi}{\mathrm{Shi}}
\newcommand{\Cat}{\mathrm{Cat}}
\newcommand{\Lin}{\mathrm{Lin}}
\newcommand{\Ish}{\mathrm{Ish}}

\newcommand{\lcm}{\operatorname{lcm}}

\newcommand{\quasi}{\operatorname{quasi}}
\newcommand{\arith}{\operatorname{arith}}

\newcommand{\Mat}{\operatorname{Mat}}
 
\newcommand{\sign}{\operatorname{sign}} 
 
\newcommand{\Part}{\operatorname{Part}} 

\newcolumntype{K}[1]{>{\centering\arraybackslash}p{#1}}

\begin{document}

\title[Signatures of Type $A$ Root Systems]{Signatures of Type $A$ Root Systems}

\begin{abstract}\label{sec:intro}
Given a type $A$ root system $\Phi$ of rank $n$, we introduce the concept of a signature for each subset $S$ of $\Phi$ consisting of $n+1$ positive roots. For a subset $S$ represented by a tuple $(\beta_1, \ldots, \beta_{n+1})$, the signature of $S$ is defined as an unordered pair $\{a, b\}$, where $a$ and $b$ denote the numbers of $1$s and $-1$s, respectively, among the cofactors $(-1)^k \det(S \setminus \{\beta_k\})$ for $1 \le k \le n+1$. We prove that the number of tuples with a given signature can be expressed in terms of classical Eulerian numbers. The study of these signatures is motivated by their connections to the arithmetic and combinatorial properties of cones over deformed arrangements defined by $\Phi$, including the Shi, Catalan, Linial, and Ish arrangements. We apply our main result to compute two important invariants of these arrangements: The minimum period of the characteristic quasi-polynomial, and the evaluation of the classical and arithmetic Tutte polynomials at $(1, 1)$.
\end{abstract}

\author{Michael Cuntz}
\address{Michael Cuntz, Institut f\"ur Algebra, Zahlentheorie und Diskrete Mathematik, Fakult\"at f\"ur Mathematik und Physik, Leibniz Universit\"at Hannover, Welfengarten 1, D-30167 Hannover, Germany}
\email{cuntz@math.uni-hannover.de}
 
\author{Hung Manh Tran}
\address{Hung Manh Tran, Faculty of Fundamental Sciences, Phenikaa University, Hanoi 12116, Vietnam.}
\email{hung.tranmanh@phenikaa-uni.edu.vn}

\author{Tan Nhat Tran}
\address{Tan Nhat Tran, Department of Mathematics and Statistics, Binghamton University (SUNY), Binghamton, NY 13902-6000, USA.}
\email{tnhattran@binghamton.edu}

\author{Shuhei Tsujie}
\address{Shuhei Tsujie, Department of Mathematics, Hokkaido University of Education, Asahikawa, Hokkaido 070-8621, Japan}
\email{tsujie.shuhei@a.hokkyodai.ac.jp}

\subjclass[2020]{Primary 05C50, Secondary 52C35}
\keywords{type $A$ root system, signature, Eulerian number, graph, hyperplane arrangement, characteristic quasi-polynomial, minimum period,  Tutte polynomial, arithmetic Tutte polynomial, matroid base}

\date{\today}
\maketitle


\section{Introduction}
The primary aim of this paper is to explore the arithmetic and combinatorial properties of a root system of type $A$. 
For $n \in \Z_{\ge 0}$, denote $[n]:=\{1,2,\ldots,n\}$.
Let $\E=\{\epsilon_1, \ldots, \epsilon_{n+1}\}$ be an orthonormal basis for $V= \R^{n+1}$. 
Define  a subspace $U$ of $V$ as follows:
$$U : = \left\{ \sum_{i=1}^{n+1} r_i\epsilon_i  \in V \,\middle\vert\, \sum_{i=1}^{n+1} r_i=0 \right\} \simeq \R^{n}.$$ 

The root system $\Phi$   of type $A_{n}$ is given by
 $$\Phi = \{\pm(\epsilon_i - \epsilon_j) \mid 1 \le i<j \le n+1\},$$ 
which lies in $U$, with a set of simple roots 
 $$\Delta = \{\alpha_i:=\epsilon_i - \epsilon_{i+1} \mid 1 \le i  \le n\},$$
 and the associated positive system
  $$\Phi^+ =  \left\{ \alpha_{i,j} :=\epsilon_i-\epsilon_j =  \sum_{k=i}^{j-1} \alpha_k\,\middle\vert\,1 \le i <  j \le n+1\right\}.$$

We will also describe this root system using matrices.
Let $n,p\in\Z_{>0}$ be positive integers, and let $\Mat_{n\times p}(\Z)$ denote the set of all $n \times p$ matrices with integer entries.
Let $S \subseteq \Phi^+$ with $\#S=r$. 
If we want to emphasize the ordering of elements in $S$, we represent $S$ as a tuple $S=(\beta_1,\ldots,\beta_{r}) \in (\Phi^+)^r$. 
Write  $\beta_j = \sum_{i=1}^nc_{ij}\alpha_i$ for each $1 \le j \le r$.
Denote by $\Cca_S := (\beta_1\,\cdots\,\beta_r)= (c_{ij}) \in \Mat_{n\times r}(\Z)$ the coefficient matrix of $S$ with respect to $\Delta$, where the columns of this matrix correspond to the vectors $\beta_i$.

\begin{remark} 
\label{rem:id} 
We will often identify \( S \) with \( \Cca_S \) when there is no risk of ambiguity. The matrix 
\[
(\epsilon_1 - \epsilon_2 \quad \epsilon_2 - \epsilon_3  \quad \cdots  \quad \epsilon_n - \epsilon_{n+1} \quad \epsilon_{n+1})
\]
serves as the transition matrix from the basis \( \Delta \cup \{\epsilon_{n+1}\} \) to the basis \( \mathcal{E} \) in \( V = \mathbb{R}^{n+1} \). This matrix preserves all key concepts in this paper, including determinant, signature, elementary divisors, lcm period, arithmetic multiplicity, and matroid base. Therefore, we can interchangeably refer to either of the above descriptions of a root system of type \( A \). 
However, unless otherwise specified, we always treat \( \Phi \) as a root system of rank \( n \) in \(U \simeq \mathbb{R}^n \) with simple roots \( \Delta =\{\alpha_1,\ldots,\alpha_n \}\). 
The description of $\Phi$ in terms of the basis $\E=\{\epsilon_1, \ldots, \epsilon_{n+1}\}$ for $V= \R^{n+1}$ will be used in \S\ref{sec:SE} 
when we identify $\Phi^+$ with the complete graph $K_{n+1}$ with vertex set $[n+1]$.
 
\end{remark}

The following fact is well-known.
\begin{lemma}
\label{lem:-1,0,1}
If $S \subseteq \Phi^+$ with $\#S=n$, then $\det(S) \in \{-1,0,1\}$.
\end{lemma}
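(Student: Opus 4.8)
The plan is to derive the lemma from the classical total unimodularity of the (oriented) incidence matrix of a graph, after translating into the $\E$-description of $\Phi$. By Remark~\ref{rem:id} we may compute $\det(S)$ using the basis $\E=\{\epsilon_1,\dots,\epsilon_{n+1}\}$ of $V=\R^{n+1}$. Let $A$ be the $(n+1)\times n$ matrix whose columns are the $\E$-coordinate vectors $\epsilon_i-\epsilon_{i+1}$ of the simple roots, and let $N_S$ be the $(n+1)\times n$ matrix whose columns are the $\E$-coordinate vectors of the roots in $S$; then $N_S=A\,\Cca_S$, and the column of $N_S$ coming from $\alpha_{i,j}\in S$ is $\epsilon_i-\epsilon_j$. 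Deleting the bottom row of $A$ leaves a lower bidiagonal $n\times n$ matrix with $1$'s on the diagonal, of determinant $1$; hence, by multiplicativity of the determinant, $\det(\Cca_S)$ equals the $n\times n$ minor of $N_S$ obtained by deleting its bottom row. Moreover every column of $N_S$ has entries summing to zero, so the rows of $N_S$ sum to zero and all $n+1$ maximal minors of $N_S$ coincide up to sign; it therefore suffices to bound the absolute value of any one of them by $1$.

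Next I would identify the columns of $N_S$ with $n$ edges of the complete graph $K_{n+1}$ on the vertex set $[n+1]$, the column $\epsilon_i-\epsilon_j$ being the edge $\{i,j\}$. If these edges contain a cycle, a signed sum of the corresponding columns of $N_S$ around the cycle vanishes, so the columns of $\Cca_S$ are linearly dependent and $\det(S)=0$. Otherwise the edge set is a forest; since a forest on the $n+1$ vertices has at most $n$ edges, with equality exactly when it is a spanning tree, $S$ must be a spanning tree of $K_{n+1}$. Now pick a leaf $v$ of this tree and consider the maximal minor of $N_S$ obtained by deleting the row indexed by $v$: in it, the unique column corresponding to the edge at $v$ has a single nonzero entry, equal to $\pm1$. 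Expanding the determinant along that column reduces the problem to a maximal minor of $N_{S'}$, where $S'$ is the spanning tree of $K_n$ obtained by removing $v$ and its edge; by induction on $n$, with base case $n=1$ (a single root with determinant $1$), this minor has absolute value $1$, so $\det(S)=\pm1$.

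I expect the first paragraph---the bookkeeping identifying the square determinant $\det(\Cca_S)$ with a maximal minor of the rectangular matrix $N_S$---to be the only place requiring attention; everything afterwards is the textbook argument that oriented incidence matrices of graphs are totally unimodular, so no serious obstacle should arise (in keeping with the statement being well known). An alternative that avoids the change of basis is to work directly with $\Cca_S$: its columns are precisely the $\{0,1\}$-vectors supported on an interval $\{i,i+1,\dots,j-1\}$, and subtracting one such column from another again produces an interval-supported vector. Using such column operations to reduce the number of nonzero entries in the last row to at most one, then expanding along that row and inducting on $n$, yields the same conclusion; this is the standard proof that ``consecutive-ones'' matrices are totally unimodular.
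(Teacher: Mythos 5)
Your proof is correct. The paper offers no argument of its own for this lemma---it is stated as ``well-known'' without proof---so there is nothing to diverge from; your argument is precisely the standard one the paper implicitly relies on, namely total unimodularity of graphic (equivalently, consecutive-ones) matrices, which the paper itself alludes to in Remark~\ref{rem:more} and exploits via the identification of $\Phi^+$ with $K_{n+1}$ in \S\ref{sec:SE}. The one step you flagged as needing attention, identifying $\det(\Cca_S)$ with a maximal minor of the rectangular incidence matrix $N_S=A\,\Cca_S$, is handled correctly: the top $n\times n$ block of $A$ is unitriangular, and the zero row-sum of $N_S$ makes all maximal minors agree up to sign, so the leaf-expansion induction goes through.
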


Now, we introduce the main concept of this paper.

\begin{definition}
\label{def:sign}
Let $\Phi$ be root system of type $A_n$, and let $S=(\beta_1,\ldots,\beta_{n+1})\in(\Phi^+)^{n+1}$. 
The \tbf{signature} $\sign(S)$ of $S$ is defined as an unordered pair $\{a,b\}$, where $a$ and $b$ are the numbers of $1$s and $-1$s, respectively, among the cofactors\footnote{These numbers are referred to as cofactors, inspired by the fact that they are the usual cofactors (up to multiplication by $(-1)^{n+1}$) of the corresponding deformation matrix (see Case \ref{C3} in \S\ref{sec:SB}).}
 $$d_k:=(-1)^k \det(S_k)\quad \text{for}  \quad 1 \le k \le n+1,$$ 
 where $S_k := (\beta_1, \ldots ,\widehat{\beta_{k}}, \ldots ,\beta_{n+1})$, and $\widehat{\beta}$ denotes the omission of $\beta$ from $S$.
\end{definition}
\vn

 \begin{example} 
\label{ex:1st}
Let $\Phi=A_3$ and $S = (\alpha_{1,2}, \alpha_{2,3}, \alpha_{1,4},  \alpha_{2,4}) \subseteq\Phi^+$. 
The coefficient matrix $\Cca_S$ of  $S$ is given by 
$$
\Cca_S = 
\begin{pmatrix}
1 & 0 & 1 &   0 \\
0 & 1 & 1 &   1 \\
0 & 0 & 1 &   1 
\end{pmatrix}.
$$
One may compute  $\sign(S)= \{1,2\}$.

\end{example}

We note that the signature of a subset is independent of the ordering of its elements, as shown in Lemma \ref{lem:unchanged}.

 \begin{remark} 
\label{rem:more}
There are two possible ways to generalize the definition of signature. First, one can replace \( \Phi^+ \) with a set \( \scR \) of integral vectors in \( \mathbb{Z}^n \), where subsets of size \( n \) satisfy the conditions in Lemma \ref{lem:-1,0,1}. For example, \( \scR \) could be the set of columns of a \emph{totally unimodular matrix}, where every square submatrix has determinant \( 0 \), \( 1 \), or \( -1 \). Notably, \( \Phi^+ \) can be viewed as a totally unimodular matrix whose columns satisfy the \emph{consecutive-ones property}, meaning that in the coefficient matrix representation of \( \Phi^+ \), the \( 1 \)s in each column appear consecutively.

Second, one can consider \( \Phi \) to be a root system of a different type. For instance, if \( \Phi \) is of type \( B_n \), then \( \det(S) \in \{-2, -1, 0, 1, 2\} \) for any subset \( S \subseteq \Phi^+ \) with \( \#S = n \). In this case, the signature would count the occurrences of \( 1 \)s, \( 2 \)s, \( -1 \)s, and \( -2 \)s among the cofactors. We leave the exploration of these generalizations for future research.
\end{remark}

 \begin{remark} 
\label{rem:spectral} 
Another direction for future research would be to explore whether there is a connection between the signature defined here and the concept of  \emph{inertia} in real symmetric matrices, which counts the number of positive, negative, and zero eigenvalues of a matrix. 
\end{remark}

We continue with a brief overview of the combinatorial setup related to   Eulerian numbers.
Let $\mathfrak{S}_{n}$ denote the symmetric group on $[n]$. 
For any permutation $\sigma\in\mathfrak{S}_{n}$, we define a \tbf{descent} to be a position  $i \in [n-1]$  such that $\sigma(i) > \sigma(i + 1)$, and we denote by $\dsc(\sigma)$ the number of descents of $\sigma$.
The \tbf{$k$-Eulerian number}  $\left\langle {n \atop k}\right\rangle$ for $0 \le k \le n-1$, is defined to be the number of permutations in $\mathfrak{S}_{n}$ with exactly $k$ descents, i.e.,
$$\left\langle {n \atop k}\right\rangle= \#\{\sigma\in\mathfrak{S}_{n} \mid \dsc(\sigma) = k\}.$$

We are now ready to present our main result on signatures.
\begin{theorem}
\label{thm:main}
Let $\Phi$ be a root system of type $A_n$.
For $1\le a \le b \le n+1$ with $a+b \le n+1$, define
$$ s_{a,b} := \#\left\{S \subseteq \Phi^+ \mid \#S=n+1 \mbox{ and } \sign(S)=\{a,b\}  \right\} $$
and
$$
u_{a,b} := (n+1)^{n+1-a-b} \binom{n}{a+b-1} \left\langle {a+b-1 \atop a-1}\right\rangle.
$$
Then
$$
s_{a,b} =
\begin{cases}
u_{a,b} & \text{if } a<b, \\
\frac{1}{2}u_{a,b} & \text{if } a=b \ne1,\\
0  & \text{if } a=b=1.
\end{cases}
$$
\end{theorem}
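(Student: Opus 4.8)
The plan is to set up a bijective/counting correspondence between tuples $S=(\beta_1,\ldots,\beta_{n+1})\in(\Phi^+)^{n+1}$ with a prescribed signature and combinatorial data that is enumerated by Eulerian numbers. First I would use the graphical model from Remark \ref{rem:id}: identify $\Phi^+$ with the edge set of $K_{n+1}$ on vertex set $[n+1]$, so that a tuple $S$ becomes an ordered list of $n+1$ edges, i.e. a graph $G_S$ on $[n+1]$ with $n+1$ edges (with a chosen edge order). Since $\#S=n+1$ equals the number of vertices, $G_S$ has exactly one independent cycle unless it is disconnected; the determinant $\det(S_k)$ is (up to sign) the determinant of the submatrix obtained by deleting the $k$-th edge, which by the type-$A$ incidence description is $\pm1$ precisely when $G_S\setminus e_k$ is a spanning tree, and $0$ otherwise. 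Thus the cofactors $d_k$ are nonzero exactly for the edges $e_k$ lying on the unique cycle $C$ of $G_S$ (and $G_S$ must be connected, so it is unicyclic), and the signature $\{a,b\}$ records how the $\pm1$ cofactors split among the $|C|$ cycle-edges. The first key step is therefore: \emph{$\sign(S)=\{a,b\}$ forces $G_S$ to be a connected unicyclic graph whose unique cycle has length exactly $a+b$}, which already explains the binomial $\binom{n}{a+b-1}$ (choosing which $a+b-1$ of the $n$ "tree" edge-slots outside the cycle form the forest attaching to it — or rather, counting spanning unicyclic structures of a given girth) and the factor $(n+1)^{n+1-a-b}$ (the number of ways the remaining forest edges hang off the cycle, by a Cayley/Prüfer-type count for rooted forests on labeled vertices).

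The second key step is to pin down, for a fixed cycle $C$ of length $\ell:=a+b$, the sign pattern of the cofactors $d_k$ as $e_k$ ranges over the $\ell$ cycle edges, and show that the number of edge-orderings/orientations realizing a split into $a$ values $+1$ and $b$ values $-1$ is governed by $\langle{\ell-1\atop a-1}\rangle$. Concretely, deleting one edge of the cycle $C$ turns $G_S$ into a spanning tree, and the sign $(-1)^k\det(S_k)$ compares the orientation of the deleted edge with the orientation it would inherit as a path in the remaining tree; running around the cycle, the sequence of these signs is determined by the cyclic sequence of edge-directions, and the count of $+1$'s versus $-1$'s among the $\ell$ deletions corresponds to a descent-type statistic on a linear order of $\ell-1$ of the cycle vertices (one vertex being fixed as a basepoint to break the cyclic symmetry). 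This is exactly where the Eulerian number $\langle{\ell-1\atop a-1}\rangle=\langle{a+b-1\atop a-1}\rangle$ enters. I expect this sign-bookkeeping — translating "$(-1)^k\det(S_k)$" around a labeled oriented cycle into a descent statistic, carefully tracking the interaction between the intrinsic vertex labels $1<2<\cdots<n+1$ (which control the signs of the $\alpha_{i,j}$) and the position index $k$ (which contributes the $(-1)^k$) — to be the main obstacle; getting the indexing conventions exactly right so that the answer is a clean Eulerian number rather than some shifted or reflected variant is the delicate part.

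Finally I would assemble the three factors: $(n+1)^{n+1-a-b}$ for the pendant forest, $\binom{n}{a+b-1}$ for selecting the cycle's vertex set together with the forest-attachment combinatorics, and $\langle{a+b-1\atop a-1}\rangle$ for the admissible sign patterns along the cycle, yielding $u_{a,b}$ as the count of \emph{ordered} tuples whose cyclic data is "pointed" in the chosen way. The remaining point is the symmetrization: the signature is the unordered pair $\{a,b\}$, and a single unordered combinatorial cycle-with-signs object is counted once under the labeling convention when $a\ne b$ but is counted twice when $a=b$ (the two ways of assigning which class is "$+1$"), giving the factor $\tfrac12$ in the case $a=b\ne1$; the case $a=b=1$ is impossible because a cycle of length $a+b=2$ does not exist in a simple graph (there are no multiple edges in $K_{n+1}$), forcing $s_{1,1}=0$. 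I would present Lemma \ref{lem:unchanged} (order-independence of the signature) en route, since it is needed to make the unordered pair well defined and to justify passing between tuples and their underlying graphs. The Eulerian recursion or the standard descent-bijection would be invoked only as a black box at the end to confirm the closed form.
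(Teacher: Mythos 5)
Your plan follows essentially the same route as the paper's proof: reduce to connected unicyclic subgraphs of $K_{n+1}$, show the nonzero cofactors live on the unique cycle and their sign pattern is a cyclic ascent/descent statistic (the paper's Lemma \ref{lem:uni}), count the attached forests by a rooted-forest/Cayley count (which the paper derives via the Abel-type identity of Lemma \ref{lem:part}), and symmetrize over the $2\ell$ representations of the cycle, with the cyclic-to-classical Eulerian reduction of Proposition \ref{prop:cc} supplying the clean $\left\langle {a+b-1 \atop a-1}\right\rangle$. The outline is correct; the sign-bookkeeping you flag as the main obstacle is resolved in the paper exactly as you anticipate, by reading the $\pm 1$ coefficients in the linear dependence $\sum_i b_i\beta_i=0$ around the cycle as cyclic descents.
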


It is interesting to note that the classical Eulerian numbers appear in the counting formula for the signature. But what is the signature used for? 
We discuss two applications related to the \emph{cones} of the \emph{deformed arrangements} defined by $\Phi$ (Theorems \ref{thm:app2} and \ref{thm:app1}).

Let $N :=\#\Phi^+$ denote the number of positive roots in the root system $\Phi$.
Recall the notation $\Cca_{\Phi^+}= (c_{ij})  \in \Mat_{n\times N}(\Z)$ of the coefficient matrix of $\Phi^+$ with respect to $\Delta$. 
Let $E_1,\ldots, E_{N} \subseteq \Z$ be nonempty sets of integers. 
Define $\A^E_n$ as an $(n+1) \times \left( \sum_{j=1}^N \#E_j + 1 \right)$ integral matrix with columns
$$\begin{pmatrix}
c_{1j} \\ \vdots \\ c_{nj} \\\ -x
\end{pmatrix},
  \quad \begin{pmatrix}
0 \\ \vdots \\ 0 \\\ 1
\end{pmatrix} \quad \text{ for all } x \in E_j, \,1 \le j \le N.
$$ 
We call $\A^E_{n}$ the \tbf{$E$-deformation} of $\Phi$. 

A typical example of deformations is those that are ``uniform", meaning that all the sets $E_j$ are identical.
Let $\ell \le m$ be integers. 
Denote $[\ell,m] := \{ k \in \Z \mid \ell \le k \le m\}$. 
The \tbf{$[\ell,m]$-deformation} $\A_n^{[\ell,m]}$ of $\Phi$ is an $E$-deformation where $E_j =[\ell,m] $ for all $1 \le j \le N$. 
For example, if $\Phi=A_2$, then
$$
\A_2^{[0,1]} = 
\begin{pmatrix}
1 & 1 & 0 & 0 & 1 &   1 & 0\\
0& 0 & 1 & 1 & 1 &   1 & 0\\
-1 & 0 & -1& 0 & -1 &   0 & 1
\end{pmatrix}.
$$

A vector in a real vector space defines a linear hyperplane that is orthogonal to it.
 Thus, a matrix defines an arrangement of hyperplanes, one for each column (see \S\ref{sec:SP}). 
 
 The root system deformation defines several well-known arrangements in the literature. 
 Let $m \ge 1$ be an integer. 
 The (central) hyperplane arrangements defined by $[1-m,m]$, $[-m,m]$, and $[1,m]$-deformations are known as the  \tbf{cones} over the (extended)  \tbf{Shi}, \tbf{Catalan}, \tbf{Linial}  arrangements of $\Phi$, respectively. 
 When the context is clear, we use the notations $\cc\mathrm{Shi}_n^{[1-m,m]}$, $\cc\Cat_n^{[-m,m]}$, and $\cc\Lin_n^{[1,m]}$ to refer to these cones and their corresponding matrices. 
 For example, $\cc\Shi_2^{[0,1]}$ refers to both $\A_2^{[0,1]}$ and the corresponding Shi cone.

 We are also interested in a non-uniform deformation closely related to $\cc\Shi_n^{[0,1]}$, known as the \tbf{Ish arrangement} \cite{Arms13}, and its cone denoted $\cc\Ish_n$. 
 The construction of Ish modifies $\cc\Shi_n^{[0,1]}$ by replacing each column of the form $(\alpha_{i,j} , -1)^T$ with $(\alpha_{1,j} , -i)^T$, where $^T$ denotes the transpose of the matrix.
 For instance, $\cc\Ish_2$ differs from $\cc\Shi_2^{[0,1]}$ only in replacing $(0 , 1 , -1)^T$ with $(1, 1 , -2)^T$. 
 It is worth noting that $\Shi_n^{[0,1]}$ and $\Ish_n$ share many properties, such as their \emph{characteristic polynomial}, the number of regions of a given number of \emph{ceilings} and \emph{degrees of freedom}, and the \emph{freeness of the derivation module} \cite{Arms13, AR12, Atha98, AST17}.

Given an integral matrix $\A$, one of the most studied polynomials associated with $\A$ is the so-called \tbf{Tutte polynomial} and its arithmetic generalization. 
We use the notation $S \subseteq \A$ to indicate a submatrix $S$ containing columns of $\A$ (or simply a subset $S$ of $\A$ when viewed as a set of column vectors). 
For each $S \subseteq \A$, denote by $r(S)$ the rank of $S$. 
The Tutte polynomial $T_\A(x,y)$ of $\A$ is a bivariate polynomial defined by
$$T_{\A}(x, y):=
\sum_{S\subseteq \A}(x-1)^{r(\A)-r(S)}(y-1)^{\#S-r(S)}.$$ 

The  \tbf{arithmetic multiplicity} of $S$, denoted $e(S)$, is the product of all elementary divisors of $S$. 
In particular, if $S$ is a \tbf{base} of $\A$ (i.e., $r(S) = r(\A) = \#S$, in the matroid sense), then $e(S) = |\det(S)|$. 
The \tbf{arithmetic Tutte polynomial} $T_\A^{\arith}(x,y)$ of $\A$ is defined as
$$T_{\A}^{\arith}(x, y):=
\sum_{S\subseteq \A}e(S)\cdot(x-1)^{r(\A)-r(S)}(y-1)^{\#S-r(S)}.$$ 
While the classical Tutte polynomial was introduced to study hyperplane arrangements and matroids, the arithmetic Tutte polynomial was developed to study \emph{toric arrangements} and \emph{arithmetic matroids} \cite{L12, DM13, BM14}. 
These polynomials have several notable evaluations that capture algebraic, combinatorial, and topological information about the arrangements, including the evaluation at $(1,1)$. 
By definition, $T_\A(1,1)$ counts the number of bases of $\A$, while $T_\A^{\arith}(1,1)$ is the sum of the arithmetic multiplicities of all bases. 
The latter also has two other interpretations: The volume of the \emph{zonotope} and the dimension of the  \emph{Dahmen-Micchelli space} associated with $\A$ \cite{L12}.

Explicit calculations of the classical and arithmetic Tutte polynomials of classical root systems ($ABCD$) are known \cite{Ard07, ACH15}. 
However, for arbitrary deformations, these calculations become quite complicated due to the involvement of deformation parameters and the need for extensive calculations of multiplicities. Our first application of Theorem \ref{thm:main} provides explicit formulas for $T_\A(1,1)$ and $T_\A^{\arith}(1,1)$ when $\A$ is an $[\ell,m]$-deformation of a type $A$ root system.

\begin{notation} 
\label{not} 
Let $k$ be an integer and $n,d$ be positive integers. Define
\begin{align*}
 \delta_n & := (n+1)^{n-1},\\
\tau_{n,k,d} & :=
\sum_{i=0}^{ \lfloor  \frac{k-n}d \rfloor} (-1)^i {n \choose i} {k-id-1 \choose n-1} .
\end{align*}
Here $ \lfloor x\rfloor$ denotes the floor function. 
\end{notation}

\begin{theorem} 
\label{thm:app2} 
Let $\Phi$ be a root system of type $A_n$.
Let $\ell, m$ be integers such that $|\ell| \le m$. 
Denote  $d:=m-\ell+1>0$. 
Then
\begin{align*}
T_{\A_n^{[\ell,m]}}(1, 1) & = \delta_n + n\delta_n d^{n-1}\sum_{k=1}^{m-\ell}(d-k)+ \sum_{ {1\le a \le b \le n+1 \atop a+b \le n+1} } d^{n+1-a-b} s_{a,b} \sum_{k=1}^{mn-\ell}(\tau_{a+b,k_1,d}+\tau_{a+b,k_2,d}), \\
T_{\A_n^{[\ell,m]}}^{\arith}(1, 1) & = \delta_n + n\delta_n d^{n-1}\sum_{k=1}^{m-\ell}(d-k)k+ \sum_{ {1\le a \le b \le n+1 \atop a+b \le n+1} } d^{n+1-a-b} s_{a,b} \sum_{k=1}^{mn-\ell}(\tau_{a+b,k_1,d}+\tau_{a+b,k_2,d})k,
\end{align*}
where $k_1:= k+a(1-\ell)+b(m+1)$ and $k_2:= k+b(1-\ell)+a(m+1)$.

\end{theorem}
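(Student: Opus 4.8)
The plan is to evaluate both invariants directly from the definitions. Every base $B$ of $\A:=\A_n^{[\ell,m]}$ has full rank $n+1$, so $e(B)=|\det B|$; hence $T_{\A}(1,1)$ counts the bases of $\A$ and $T_{\A}^{\arith}(1,1)=\sum_{B}|\det B|$, the sums running over all bases $B$. So the whole proof reduces to classifying the bases of $\A$ and recording $|\det B|$ for each, after which both sums are evaluated family by family.

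By Remark~\ref{rem:id} I would work in the $\E$-description, so that $\Phi^+$ is the edge set of the complete graph $K_{n+1}$ on $[n+1]$, $\alpha_{i,j}=\epsilon_i-\epsilon_j$, and $\Cca_S$ is a signed incidence matrix. The columns of $\A$ are then the ``edge columns'' $v_e^{(x)}$ (for an edge $e$ of $K_{n+1}$ and $x\in[\ell,m]$) together with one extra column $\eta:=(0,\dots,0,1)^{T}$. For a base $B$ let $G_B$ be the multigraph on $[n+1]$ carried by the edge columns of $B$, and recall that the incidence matrix of a spanning subgraph of $K_{n+1}$ with $c$ components has rank $(n+1)-c$. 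One then shows: if $\eta\in B$, then $G_B$ has $n$ edges and $B$ is a base iff $G_B$ is a spanning tree, in which case expanding $\det B$ along the column $\eta$ gives $\det B=\pm\det\Cca_{G_B}=\pm1$ by Lemma~\ref{lem:-1,0,1}; if $\eta\notin B$, then $G_B$ has $n+1$ edges and $B$ is a base iff $G_B$ is connected and spanning (equivalently, $G_B$ has a unique cycle $C$) and the cycle labels satisfy $\sum_{e\in C}\kappa_e x_e\neq0$, where $\kappa=(\kappa_e)$ is the primitive integer kernel vector of $\Cca_{G_B}$. Here $\kappa_e\in\{-1,0,1\}$ by Lemma~\ref{lem:-1,0,1}, $\kappa_e=0$ exactly on the bridges of $G_B$, and expanding $\det B$ along the last row gives $\det B=\pm\sum_{e\in C}\kappa_e x_e$. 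Crucially, the cofactor vector $(d_1,\dots,d_{n+1})$ of Definition~\ref{def:sign} equals $\kappa$, so when $C$ is a proper cycle of length $g\ge 3$ (so $G_B$ is a genuine $(n+1)$-subset of $\Phi^+$) one gets $\sign(G_B)=\{a,b\}$ with $a+b=g$, where $a$ and $b$ are the numbers of cycle edges with $\kappa_e=+1$ and $\kappa_e=-1$ (in some order); both are positive because a cycle of $K_{n+1}$ can be traversed neither purely increasingly nor purely decreasingly. This recovers the summation range $1\le a\le b$, $3\le a+b\le n+1$ (and explains $s_{1,1}=0$).

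Next I would sum over the three families, using Cayley's count $\delta_n=(n+1)^{n-1}$ of spanning trees of $K_{n+1}$. (i) Bases containing $\eta$: a spanning tree with an arbitrary $[\ell,m]$-labeling of its $n$ edges, each with $|\det|=1$, contributing $\delta_n d^{\,n}$ to each sum. (ii) Bases without $\eta$ whose cycle is a digon: a spanning tree, one of its $n$ edges to be doubled, an arbitrary labeling of the remaining $n-1$ edges, and an unordered pair of distinct labels on the doubled edge; the cycle condition is automatic and $|\det|=|x'-x''|$, contributing $n\delta_n d^{\,n-1}\binom{d}{2}=n\delta_n d^{\,n-1}\sum_{k=1}^{m-\ell}(d-k)$ to $T_{\A}(1,1)$ and $n\delta_n d^{\,n-1}\sum_{k=1}^{m-\ell}k(d-k)$ to $T_{\A}^{\arith}(1,1)$. (iii) Bases without $\eta$ whose cycle is proper: grouping by the underlying subgraph $G$, for each signature $\{a,b\}$ there are $s_{a,b}$ such $G$, each with $d^{\,n+1-a-b}$ free labelings of its $n+1-a-b$ bridges, the admissible labelings $(x_e)\in[\ell,m]^{a+b}$ of the cycle edges being those with $\sum_{A^{+}}x_e\neq\sum_{A^{-}}x_e$ (where $A^{\pm}$ are the cycle edges with $\kappa_e=\pm1$, $|A^{+}|=a$, $|A^{-}|=b$). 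Counting these is the crux: reflecting the $A^{-}$-labels via $x\mapsto\ell+m-x$ turns $\sum_{A^{+}}x_e-\sum_{A^{-}}x_e$ into an affine function of a single sum of $a+b$ variables ranging independently over $[\ell,m]$, so the number of labelings achieving a fixed positive difference $w$ equals the number of compositions of a shifted value into $a+b$ parts from $[1,d]$, which by the standard inclusion--exclusion evaluation (Notation~\ref{not}) is $\tau_{a+b,k_1,d}$ with $k=w$; reflecting the $A^{+}$-labels instead produces $\tau_{a+b,k_2,d}$ for the negative differences $w=-k$. Summing over $k\ge1$ (it suffices to stop at $mn-\ell$, past which the hypothesis $a+b\le n+1$ forces the relevant $\tau$'s to vanish) gives $\sum_{k=1}^{mn-\ell}(\tau_{a+b,k_1,d}+\tau_{a+b,k_2,d})$ admissible cycle labelings; and since $|\det B|=|w|=k$ along this bijection, the arithmetic count carries the extra factor $k$. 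Assembling (i)--(iii) yields the two stated identities.

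I expect family (iii) to be the main obstacle: reducing the non-vanishing of $\det B$ to the single linear inequality $\sum_{A^{+}}x_e\neq\sum_{A^{-}}x_e$ and reading $|\det B|$ off it, and then carrying out the reflection and inclusion--exclusion so that exactly the shifts $k_1=k+a(1-\ell)+b(m+1)$ and $k_2=k+b(1-\ell)+a(m+1)$ appear, while handling the summation range and the coincidence $k_1=k_2$ when $a=b$ (there the apparent doubling is exactly compensated by the symmetry $A^{+}\leftrightarrow A^{-}$, a bijection on labelings since $a=b$). The tree/digon trichotomy, Cayley's formula, and the evaluations $|\det B|=1$ and $|\det B|=|x'-x''|$ are routine once the incidence-matrix picture is set up.
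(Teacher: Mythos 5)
Your proposal follows the paper's proof almost step for step: the same trichotomy of bases (those containing the coning column $(0\cdots 0\,1)^T$; those omitting it but repeating a root, i.e.\ a doubled tree edge; those omitting it with $n+1$ distinct roots forming a unicyclic graph), the same use of Cayley's formula for $\delta_n$, the same identification of the cofactor vector with the kernel vector of the unique cycle so that $|\det B|=\bigl|\sum_{i=1}^{a}x_i-\sum_{i=a+1}^{a+b}x_i\bigr|$ where $\{a,b\}$ is the signature of the root part, and the same reflection $x\mapsto \ell+m-x$ reducing the count of cycle labelings with prescribed difference $\pm k$ to the composition counts $\tau_{a+b,k_1,d}$ and $\tau_{a+b,k_2,d}$ (this is exactly the paper's Corollary~\ref{cor:sol}). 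Your second and third families reproduce the second and third terms of the stated formulas, including the bound $k\le mn-\ell$ and the disjointness of the two sign cases when $a=b$.

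The one point of divergence is the first family. You count $\delta_n d^{\,n}$ bases containing the coning column (a spanning tree together with an arbitrary label in $[\ell,m]$ on each of its $n$ edges, each label giving a genuinely different column of $\A_n^{[\ell,m]}$), whereas the theorem's first term, and the paper's Case~\ref{C1}, record only $\delta_n$. Your count is the one consistent with direct computation: for $\A_2^{[0,1]}$ there are $\binom{6}{2}-3=12$ bases containing $(0,0,1)^T$, $12$ bases with a doubled root, and $5$ unicyclic bases, for a total of $29=\delta_2 d^{2}+12+5$, while the displayed formula evaluates to $3+12+5=20$. So your argument is sound, but what it establishes is the identity with first term $\delta_n d^{\,n}$ (and the same correction in the arithmetic version, where each such base has multiplicity $1$); the statement as printed appears to have dropped the factor $d^{\,n}$ accounting for the choices of the last-row entries $x_1,\dots,x_n$ in Case~\ref{C1}. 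You should flag this discrepancy explicitly rather than silently reproduce the printed first term.
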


Another significant invariant associated with an integral matrix $\A$ is the \tbf{characteristic quasi-polynomial} $\chi^{\quasi}_\A(q)$, which enumerates the size of the complement of the arrangement defined by $\A$ modulo a positive integer $q$ (see \S\ref{sec:SP}). 
The characteristic quasi-polynomial is an important counting function that encodes both arithmetic and combinatorial properties of hyperplane and toric arrangements \cite{LTY21, TY19}. 
Determining the minimum (or best) period of a quasi-polynomial is generally a challenging problem. 
It has been proven that the minimum period $\rho\A$ of $\chi^{\quasi}_\A(q)$ equals the \tbf{lcm period} \cite{HTY23}, defined as the least common multiple of the largest elementary divisors of all submatrices consisting of the columns of $\A$. 
It is easily seen that a type $A$ root system has lcm period $1$. 
However, for its deformation, the lcm period is nontrivial and has not been widely studied.

We aim to initiate a study of this lcm period and show that the signature is an efficient tool for computing it in most of significant cases. 
This leads to our second application of Theorem \ref{thm:main}.

\begin{theorem} 
\label{thm:app1} 
Let $\Phi$ be a root system of type $A_n$.
Let $\ell, m$ be integers such that $|\ell| \le m$ and $m+1 \ge n\ell$. 
The minimum periods of the characteristic quasi-polynomials of the cones over the $[\ell,m]$-deformed and Ish arrangements are given by
\begin{align*}
\rho_{\A_n^{[\ell,m]}}  & = \lcm\{1,2,\ldots,mn-\ell\}, \\
\rho_{\cc\Ish_n} & =  \lcm\{1,2,\ldots, n\}.
\end{align*}
In particular,  $\cc\Shi_n^{[0,1]}=\A_n^{[0,1]}$ and $\cc\Ish_n $ share the same minimum period. 
\end{theorem}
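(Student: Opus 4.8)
The plan is to compute the \emph{lcm period} of each matrix $\A\in\{\A_n^{[\ell,m]},\cc\Ish_n\}$ — the least common multiple of the largest elementary divisors $e_{\max}(S)$ over all column‑submatrices $S$ of $\A$ — and invoke the established identification $\rho_\A=(\text{lcm period})$ from \cite{HTY23}. The bridge to signatures is a cofactor expansion: if $S$ is an $(n+1)\times(n+1)$ submatrix with columns $(\beta_k,-x_k)^T$, $k\in[n+1]$, then expanding $\det S$ along the last row gives $\det S=\pm\sum_{k=1}^{n+1}x_kd_k$, where $d_k=(-1)^k\det(\beta_1,\dots,\widehat{\beta_k},\dots,\beta_{n+1})\in\{-1,0,1\}$ by Lemma \ref{lem:-1,0,1}; when the $\beta_k$ are distinct these $d_k$ are exactly the cofactors defining $\sign(\{\beta_1,\dots,\beta_{n+1}\})$, so $a+b$ of them are nonzero where $\{a,b\}$ is that signature. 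Two observations make this usable: (i) the $n\times n$ minor of $S$ obtained by deleting the last row and the $k$‑th column equals $\pm d_k$, so as soon as some $d_k=\pm1$ one has $\Delta_n(S)=1$ and hence $e_{\max}(S)=|\det S|$; and (ii) for any submatrix $S$ of rank $r$, $e_{\max}(S)$ divides every nonzero $r\times r$ minor of $S$, so it suffices to realize, resp.\ bound, determinants of square submatrices. By Theorem \ref{thm:main} the signatures that occur are precisely the $\{a,b\}$ with $1\le a\le b$, $a+b\le n+1$, $(a,b)\ne(1,1)$; in particular $a\ge1$, so (i) applies to every $(n+1)\times(n+1)$ submatrix whose roots are distinct and not all of whose cofactors vanish.

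For the lower bound on $\rho_{\A_n^{[\ell,m]}}$ I would take the subset $S_0=\{\alpha_{1,n+1},\alpha_1,\dots,\alpha_n\}$, which one checks has signature $\{1,n\}$ with all $d_k=\pm1$ and a single isolated sign (for $n=1$ one uses instead two columns with root $\alpha_1$). Then $\det S=\pm\bigl(x_0-(x_1+\dots+x_n)\bigr)$ with each $x_i$ free in $[\ell,m]$, and a short computation shows $|x_0-\sum_ix_i|$ sweeps out all of $\{1,\dots,mn-\ell\}$: when $m\ge n\ell$ the range of $x_0-\sum x_i$ contains $0$, and in the borderline case $m=n\ell-1$ it lies entirely in $\{-(mn-\ell),\dots,-1\}$ — this is exactly where the hypothesis $m+1\ge n\ell$ is used. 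By (i) each resulting submatrix has $e_{\max}$ equal to the value realized, so $\lcm\{1,\dots,mn-\ell\}$ divides $\rho_{\A_n^{[\ell,m]}}$. For the matching upper bound I would show $e_{\max}(S)\le mn-\ell$ for every column‑submatrix $S$, which by (ii) reduces to bounding $|\det|$ over square submatrices: an $(n+1)\times(n+1)$ submatrix containing the column $(0,\dots,0,1)^T$ has $|\det|\le1$; one with repeated roots reduces to a $2$‑cycle and has $|\det|\le m-\ell$; and one with distinct roots and signature $\{a,b\}$ has $|\det|\le\max(|a\ell-bm|,|am-b\ell|)$, which the constraints $1\le a\le b$, $a+b\le n+1$ together with $|\ell|\le m$ force to be $\le mn-\ell$. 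Square submatrices of size $r\le n$ are treated the same way, replacing the kernel relation by a $\{-1,0,1\}$‑valued circuit of length $\le n$ and using the identical estimate. Combining both bounds yields $\rho_{\A_n^{[\ell,m]}}=\lcm\{1,\dots,mn-\ell\}$.

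The Ish case follows the same template, with deformation values now lying in $\{0,1,\dots,n\}$ (a column $(\alpha_{1,j},-i)$ contributing $i\le j-1\le n$, and $0$ for the unmodified columns $(\alpha_{i,j},0)$). For the lower bound, $S_0$ together with the columns $(\alpha_{1,n+1},-t)$ for $t=1,\dots,n$ (and the $0$‑columns for the other roots) realizes every $|\det|\in\{1,\dots,n\}$, each with $e_{\max}=|\det|$, so $\lcm\{1,\dots,n\}\mid\rho_{\cc\Ish_n}$. For the upper bound one needs the combinatorial observation that the columns with nonzero deformation coordinate are precisely the graph edges incident to the vertex $\epsilon_1$; a circuit passes through $\epsilon_1$ at most once, hence uses at most two such edges, and since $\epsilon_1$ carries the smallest index those two edges receive opposite signs in the cycle relation — so $\det S=\pm(x'-x'')$ with $x',x''\in\{0,\dots,n\}$, giving $|\det S|\le n$ (the parallel‑edge and $(0,\dots,0,1)^T$ cases being easier). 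Hence $\rho_{\cc\Ish_n}=\lcm\{1,\dots,n\}$, and the final assertion is the specialization $\ell=0$, $m=1$.

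I expect the main obstacle to be the determinant bookkeeping in the upper bounds: pinning down the precise set of realizable $|\det|$‑values over all occurring signatures, verifying the boundary case $m=n\ell-1$ of the hypothesis, and confirming that lower‑rank square submatrices — which genuinely can have nontrivial elementary divisors, up to $m-\ell$ — never exceed $mn-\ell$, together with the analogous circuit/orientation argument for Ish. The step that makes Theorem \ref{thm:main} directly (rather than merely suggestively) relevant is observation (i), which upgrades each realizable determinant to a realizable value of $e_{\max}$.
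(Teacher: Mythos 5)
Your proposal is correct in substance and, for the lower bounds, coincides with the paper's argument: both use \(\mathrm{lcm}\ \text{period} = \rho_\A\) from \cite{HTY23}, both realize every value in \([mn-\ell]\) (resp.\ \([n]\)) via the unique signature-\(\{1,n\}\) subset consisting of the simple roots and the highest root, both use the hypothesis \(m+1\ge n\ell\) exactly where you say (so that the interval \([\ell-nm,\,m-n\ell]\) of values of \(x_0-\sum x_i\) reaches up to \(-1\)), and both upgrade \(|\det|\) to the top elementary divisor via the unimodular \(n\times n\) root minor (the paper phrases this as row reduction; your determinantal-divisor argument \(\Delta_n=1\) is the same fact). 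Your treatment of the Ish two-edge case is actually slightly more careful than the paper's, which asserts the cycle is a triangle when it need only pass through vertex \(1\) once. The genuine divergence is in the upper bound. The paper does not bound the largest elementary divisor of \emph{every} column-submatrix; instead it invokes Proposition \ref{prop:ub} (from \cite{HTY23,BM14}), which says that \(\mu_\Cca=\lcm\{e(B)\mid B\ \text{a base}\}\) is already a period, so \(\rho_\Cca\le\mu_\Cca\) and only full-rank \((n+1)\)-column bases ever need to be analyzed --- and those are exactly the three cases classified in the proof of Theorem \ref{thm:app2}. Your route of bounding \(e_{\max}(S)\) over all submatrices via observation (ii) is workable but strictly harder, and the piece you flag as an obstacle --- lower-rank square submatrices, where you must re-run the circuit/signature estimate for cycles of length \(r\le n\) and check that \(\max(bm-a\ell,\,am-b\ell)\le mn-\ell\) still holds with \(a+b=r\) --- is real work that the paper's Proposition \ref{prop:ub} eliminates entirely. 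I would recommend replacing that portion of your plan with an appeal to that proposition; otherwise you must actually carry out the lower-rank analysis (it does go through, since any rank-\(r\) submatrix with nontrivial top divisor forces a unique circuit among its root parts and the same sign bookkeeping applies, but it should not be left at ``treated the same way'').
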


Note that $\cc\mathrm{Shi}_n^{[1-m,m]}$ and $\cc\Cat_n^{[-m,m]}$ satisfy the conditions in  Theorem \ref{thm:app1}, while $\cc\Lin_n^{[1,m]}$ satisfies the second condition when $m+1 \ge n$.

 \section{Signatures and Eulerian numbers}
 \label{sec:SE}
 
 In this section, we provide the proof of our main result, Theorem \ref{thm:main}. 
 We begin by recalling some properties of Eulerian numbers.

 For any permutation $\sigma \in \mathfrak{S}_n$, we define an \tbf{ascent}  as a position $i \in [n-1]$ such that $\sigma(i) < \sigma(i+1)$, and we denote by $\asc(\sigma)$ the number of ascents of $\sigma$. It is straightforward to observe that the numbers of descents and ascents satisfy the identity:
$$\asc(\sigma)+\dsc(\sigma)=n-1.$$

 Moreover, reversal of a permutation swaps ascents and descents. 
 This provides a natural bijection between the set of permutations with $k$ descents and those with $k$ ascents. 
 In particular, Eulerian numbers are  \tbf{palindromic}, meaning:
$$ \left\langle {n \atop k}\right\rangle= \left\langle {n \atop n-1-k}\right\rangle.$$

Next, we introduce cyclic descents and cyclic ascents for permutations in $\mathfrak{S}_n$. 
For a permutation $\sigma \in \mathfrak{S}_n$, we define a  \tbf{cyclic descent}  as a position $i \in [n]$ such that $\sigma(i) > \sigma(i+1)$, where we set $\sigma(n+1) = \sigma(1)$. 
We denote by $\widetilde{\dsc}(\sigma)$ the number of cyclic descents of $\sigma$. 
Similarly, we define  \tbf{cyclic ascents} and denote the number of cyclic ascents by $\widetilde{\asc}(\sigma)$.

The \tbf{cyclic $k$-Eulerian number}, denoted by $\widetilde{\left\langle {n \atop k} \right\rangle}$ for $1 \leq k \leq n-1$, counts the number of permutations in $\mathfrak{S}_n$ with exactly $k$ cyclic descents, i.e.,
$$\widetilde{\left\langle {n \atop k}\right\rangle}= \#\{\sigma\in\mathfrak{S}_{n} \mid\widetilde{\dsc}(\sigma) = k\}.$$

The classical and cyclic Eulerian numbers are related by the following formula:

 \begin{proposition}[{\cite[Proposition 1.1]{Peter05}}]
 \label{prop:cc}
 For $1 \le k \le n-1$, we have:
$$
\widetilde{\left\langle {n \atop k}\right\rangle} =n
 \left\langle {n-1 \atop k-1}\right\rangle.
$$
\end{proposition}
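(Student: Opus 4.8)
The plan is to produce the factor $n$ geometrically by passing to \emph{cyclic words}. The first step is to observe that the cyclic descent statistic is constant on rotation classes: if $w=w_1w_2\cdots w_n$ is the one-line notation of $\sigma\in\mathfrak{S}_n$ and $w'=w_2w_3\cdots w_nw_1$ is its cyclic shift, then the cyclic list of consecutive pairs $(w_i,w_{i+1})$ (indices read modulo $n$, so $w_{n+1}=w_1$) is literally the same multiset for $w$ and for $w'$, hence $\widetilde{\dsc}$ depends only on the rotation class of $w$. Since the $n$ entries of $w$ are pairwise distinct, that rotation class has no nontrivial rotational symmetry, so it contains exactly $n$ words. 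Therefore
$$
\widetilde{\left\langle {n \atop k}\right\rangle} = n\cdot R_{n,k},\qquad R_{n,k}:=\#\{\text{rotation classes of permutations of }[n]\text{ with exactly }k\text{ cyclic descents}\}.
$$

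Next I would count $R_{n,k}$ using the canonical representative of each rotation class in which the largest letter sits last, i.e. $w_n=n$. For such a representative the wraparound pair $(w_n,w_{n+1})=(n,w_1)$ is always a cyclic descent (position $n$), while the pair $(w_{n-1},w_n)=(w_{n-1},n)$ is never one (position $n-1$); the cyclic descents at the remaining positions $1,\ldots,n-2$ are precisely the ordinary descents of the word $w_1w_2\cdots w_{n-1}$, which is the one-line notation of an arbitrary permutation of $[n-1]$. Thus $\widetilde{\dsc}(w)=\dsc(w_1\cdots w_{n-1})+1$, and $w\mapsto w_1\cdots w_{n-1}$ is a bijection from canonical representatives onto $\mathfrak{S}_{n-1}$ sending ``$k$ cyclic descents'' to ``$k-1$ ordinary descents.'' Hence $R_{n,k}=\left\langle {n-1 \atop k-1}\right\rangle$, and combining this with the displayed identity yields $\widetilde{\left\langle {n \atop k}\right\rangle}=n\left\langle {n-1 \atop k-1}\right\rangle$; the hypothesis $1\le k\le n-1$ corresponds exactly to the range $0\le k-1\le n-2$ of the descent statistic on $\mathfrak{S}_{n-1}$.

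The one point I would check most carefully — and the only place any genuine idea is needed — is the rotation-invariance of $\widetilde{\dsc}$ together with the freeness of the rotation action on permutations (so that every rotation class has size exactly $n$); both are immediate because the letters are distinct, but they are what produces the clean factor $n$, so I would state them as an explicit sublemma rather than fold them into the bijection. Everything after that is bookkeeping about which of the two boundary positions $n-1,n$ of the canonical representative are forced to be an ascent or a descent.
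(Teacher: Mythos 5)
Your proof is correct and complete. Note that the paper itself offers no proof of this proposition --- it is quoted directly from \cite[Proposition 1.1]{Peter05} --- and your argument (rotation-invariance of $\widetilde{\dsc}$, freeness of the rotation action on words with distinct letters giving the factor $n$, and the canonical representative with $n$ in the last position forcing a cyclic descent at position $n$ and an ascent at position $n-1$) is the standard proof of this identity, essentially the one found in the cited reference. All three steps you flag as needing care do hold for exactly the reasons you give, so there is nothing to add.
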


We now turn to some fundamental properties of the signature.
 
 \begin{lemma}
 \label{lem:unchanged}
 
Let $\beta_1,\ldots,\beta_{n+1} \in \Phi^+$ be mutually distinct positive roots.
For any permutation $\sigma\in\mathfrak{S}_{n+1}$,   we have:
$$
\sign(\beta_1,\ldots,\beta_{n+1}) =
\sign(\beta_{\sigma(1)},\ldots,\beta_{\sigma(n+1)}).
$$
\end{lemma}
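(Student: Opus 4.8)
The plan is to reduce the general permutation case to the case of a single adjacent transposition $\sigma = (k, k+1)$, since adjacent transpositions generate $\mathfrak{S}_{n+1}$ and the signature, being defined as an unordered pair of counts, will be invariant under the full group once it is invariant under each generator. So it suffices to compare $\sign(\beta_1,\ldots,\beta_k,\beta_{k+1},\ldots,\beta_{n+1})$ with $\sign(\beta_1,\ldots,\beta_{k+1},\beta_k,\ldots,\beta_{n+1})$, tracking what happens to each cofactor $d_j := (-1)^j \det(S_j)$.

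First I would set up notation carefully: let $S = (\beta_1,\ldots,\beta_{n+1})$ and let $S' = (\beta_1,\ldots,\beta_{k+1},\beta_k,\ldots,\beta_{n+1})$ be the tuple with the $k$-th and $(k+1)$-th entries swapped. For an index $j \notin \{k, k+1\}$, the submatrix $S'_j$ (obtained by deleting column $j$) is obtained from $S_j$ by swapping two adjacent columns — specifically the columns originally indexed $k$ and $k+1$ — so $\det(S'_j) = -\det(S_j)$; but the sign $(-1)^j$ is the same, hence $d'_j = -d_j$. For the two special indices, deleting the $k$-th column of $S'$ gives exactly the same matrix as deleting the $(k+1)$-th column of $S$ (both equal $(\beta_1,\ldots,\beta_{k-1},\beta_{k+1},\beta_{k+2},\ldots)$ — wait, I must be careful: $S'$ has $\beta_{k+1}$ in slot $k$ and $\beta_k$ in slot $k+1$), so $S'_k = S_{k+1}$ as matrices and $S'_{k+1} = S_k$; thus $d'_k = (-1)^k \det(S_{k+1}) = -(-1)^{k+1}\det(S_{k+1}) = -d_{k+1}$ and similarly $d'_{k+1} = (-1)^{k+1}\det(S_k) = -(-1)^k \det(S_k) = -d_k$. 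So in all cases $d'_j = -d_{\pi(j)}$ where $\pi$ is the transposition $(k,k+1)$ acting on indices.

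The upshot is that the multiset $\{d'_1,\ldots,d'_{n+1}\}$ equals $\{-d_1,\ldots,-d_{n+1}\}$: every cofactor is negated and the indices are permuted, neither of which changes the multiset of values up to sign. Negating all the $d_j$ interchanges the count $a$ of $+1$'s with the count $b$ of $-1$'s. Since $\sign$ is defined as the \emph{unordered} pair $\{a,b\}$, we get $\sign(S') = \{b,a\} = \{a,b\} = \sign(S)$. This handles one adjacent transposition; composing, an arbitrary $\sigma$ is a product of such transpositions, and at each step the signature is preserved, so $\sign(\beta_{\sigma(1)},\ldots,\beta_{\sigma(n+1)}) = \sign(\beta_1,\ldots,\beta_{n+1})$.

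I do not anticipate a serious obstacle here — this is essentially bookkeeping with signs of determinants under column permutations. The one point that needs a little care is the index $(-1)^j$ in the definition of the cofactor: a naive "swapping columns negates the determinant" argument could mislead one into thinking the signature changes, and the resolution is precisely that the whole tuple of cofactors gets globally negated (not just permuted), which is invisible to the unordered pair $\{a,b\}$. It is also worth remarking that the hypothesis that the $\beta_i$ are mutually distinct is what makes $\sigma$ act on a genuine tuple of distinct vectors, though the sign computation itself does not really use distinctness.
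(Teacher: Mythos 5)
Your proof is correct and follows essentially the same route as the paper: reduce to an adjacent transposition, observe that the cofactors satisfy $d'_k=-d_{k+1}$, $d'_{k+1}=-d_k$, and $d'_j=-d_j$ otherwise, and conclude that the global negation merely swaps the counts $a$ and $b$, which is invisible to the unordered pair. Your write-up is in fact more detailed than the paper's, which leaves the sign computation as "easy to show."
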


\begin{proof} 
Fix $1 \leq i \leq n$. 
Consider the two tuples:
$$
S = (\beta_1,\ldots,\beta_{i-1},\beta_{i},\beta_{i+1},\beta_{i+2},\ldots, \beta_{n+1}) \mbox{ and }
S'=(\beta_1,\ldots,\beta_{i-1},\beta_{i+1},\beta_{i},\beta_{i+2},\ldots, \beta_{n+1}) .
$$

Since the symmetric group on a finite set is generated by consecutive transpositions, it suffices to prove that $\sign(S) = \sign(S')$.
The cofactors of $S$ are denoted by $d_k$ (from Definition \ref{def:sign}), and it is easy to show that $d_i=-d'_{i+1}$, $d_{i+1}=-d'_{i}$ and $d_{k}=-d'_{k}$ for all $k \notin \{i,i+1\}$.
The assertion follows.
\end{proof}

Let \( \Phi \) be a root system of type \( A_n \). In the remainder of this section, we describe \( \Phi \) in terms of the basis \( \mathcal{E} = \{\epsilon_1, \ldots, \epsilon_{n+1}\} \) for \( V = \mathbb{R}^{n+1} \). The positive system \( \Phi^+ \) can be identified with the complete graph \( K_{n+1} \) with vertex set \( [n+1] \), where each positive root \( \epsilon_i - \epsilon_j \in \Phi^+ \) corresponds to the edge \( \{i, j\} \in K_{n+1} \). 
Thus, each subset \( S \subseteq \Phi^+ \) can be viewed as a subgraph of \( K_{n+1} \). A subset \( S \) of positive roots is linearly dependent if and only if it contains a cycle in \( K_{n+1} \).

We now need a few results from graph theory.

 \begin{lemma}[Circuit rank]
 \label{lem:cr}
 
 Let $G$ be an undirected graph, and let $v$, $e$, and $c$ denote the number of vertices, edges, and connected components of $G$, respectively. 
 The \tbf{circuit rank} $r$ of $G$, defined as the number of independent cycles in $G$, is given by:
$$r = e-v+c.$$
\end{lemma}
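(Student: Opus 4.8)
The plan is to identify the informally-stated ``number of independent cycles'' of $G$ with the dimension of the cycle space of $G$ over $\F_2$ (equivalently, the nullity of the graphic matroid $M(G)$ on the edge set of $G$), and then compute that dimension by means of a spanning forest. First I would fix a spanning forest $F$ of $G$: since $G$ has $c$ connected components, $F$ is the disjoint union of $c$ trees, one per component, and a tree on $v_i$ vertices has $v_i-1$ edges, so $F$ has $\sum_i (v_i-1)=v-c$ edges. Consequently there are exactly $e-(v-c)=e-v+c$ edges of $G$ lying outside $F$.

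Next I would invoke the standard fundamental-cycle construction: each non-forest edge $g=\{x,y\}$ determines a unique cycle $C_g$, namely the unique $x$--$y$ path in $F$ together with $g$. I would check that the cycles $\{C_g\}_g$ are linearly independent over $\F_2$ — each $C_g$ contains the edge $g$, which lies in no other $C_{g'}$ — and that they span the cycle space: given any cycle (more generally any even subgraph) $Z$, the symmetric difference of $Z$ with the $C_g$ over all non-forest edges $g$ used by $Z$ is an even subgraph all of whose edges lie in $F$, hence is empty. Therefore the cycle space has dimension exactly $e-v+c$, which is the asserted value of $r$.

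An alternative route, perhaps closer in spirit to the combinatorial tone of the paper, is induction on $e$: with no edges we have $r=0$ and $e-v+c=0-v+v=0$; when a new edge is added it either joins two distinct components, in which case $e$ increases by one, $c$ decreases by one, the quantity $e-v+c$ is unchanged, and the new edge is a bridge contributing no new independent cycle, or it joins two vertices already in the same component, in which case $e-v+c$ increases by one and precisely one new independent cycle appears. Either way the formula propagates.

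The only point that requires genuine care, as opposed to bookkeeping, is the claim that the fundamental cycles form a \emph{basis} of the cycle space — equivalently, in the inductive version, that adding a within-component edge raises the count of independent cycles by exactly one and never by more. I expect this to be the main (and essentially the only) obstacle, and it is handled by the even-degree/symmetric-difference argument sketched above, which is the abstract reason why the graphic matroid $M(G)$ has nullity $e-v+c$.
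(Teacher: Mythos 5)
Your argument is correct. The paper itself offers no proof of this lemma: it is stated as a standard fact from graph theory (the circuit rank, or first Betti number, of a graph), so there is nothing to compare against line by line. Your identification of the ``number of independent cycles'' with the dimension of the cycle space over $\F_2$ is the right reading of the statement, and the spanning-forest/fundamental-cycle argument is the standard proof: the forest has $v-c$ edges, the $e-v+c$ fundamental cycles are independent because each contains its own non-forest edge, and they span because the symmetric difference of any even subgraph with the appropriate fundamental cycles is an even subgraph of a forest, hence empty (a nonempty subgraph of a forest has a leaf). You correctly isolate the one point needing care --- that the fundamental cycles form a basis, i.e.\ that a within-component edge adds exactly one to the rank --- and you close it. The inductive variant you sketch is equally valid and arguably matches the way Lemma \ref{lem:cr} is actually used later (in Lemma \ref{lem:conn} and Lemma \ref{lem:delta}, only the cases $r=0$ and $r=1$ are needed), but either route suffices.
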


 \begin{lemma}
 \label{lem:conn}
If $S \subseteq K_{n+1}$ is a subgraph consisting of $n+1$ edges and contains exactly one cycle, then $S$ must be connected.\footnote{A connected graph containing exactly one cycle is called \emph{unicyclic}.}
\end{lemma}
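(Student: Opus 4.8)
The plan is to use the circuit rank formula (Lemma \ref{lem:cr}) to pin down the number of connected components. Let $S \subseteq K_{n+1}$ be a subgraph with $e = n+1$ edges and exactly one cycle, so its circuit rank is $r = 1$. Let $v$ denote the number of non-isolated vertices of $S$ (those incident to at least one edge) and let $c$ be the number of connected components among these, so that $r = e - v + c$, i.e. $1 = (n+1) - v + c$, giving $c = v - n$.

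Next I would bound $v$ from above. Since $S$ has $n+1$ edges and contains a cycle, removing one suitable edge from that cycle yields a subgraph with $n$ edges on the same vertex set, which is a forest; a forest on $v$ vertices has at most $v-1$ edges, so $n \le v - 1$, i.e. $v \ge n+1$. On the other hand, $S$ is a subgraph of $K_{n+1}$, so $v \le n+1$. Hence $v = n+1$, and therefore $c = v - n = 1$, which means $S$ is connected (and in particular uses every vertex of $K_{n+1}$).

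An alternative, essentially equivalent route avoids introducing $v$ separately: argue that each connected component is either an isolated vertex or a subgraph with at least as many edges as vertices would force more than one cycle. More precisely, a connected component that is a tree on $v_i$ vertices has $v_i - 1$ edges and no cycle, while any connected component with a cycle has circuit rank at least $1$; summing circuit ranks over components and using that the total is $1$ forces exactly one component to contain a cycle and all others to be trees. Then a counting argument on $\sum (v_i - 1) + 1 = n+1$ together with $\sum v_i \le n+1$ again yields a single component. I expect the only mild subtlety to be bookkeeping about isolated vertices: one must be careful whether $S$ is regarded as a subgraph on the full vertex set $[n+1]$ (in which case isolated vertices are extra components that must be excluded) or only on its non-isolated vertices; the cleanest exposition fixes this convention at the outset and then the computation with Lemma \ref{lem:cr} is immediate.
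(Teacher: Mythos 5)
Your proposal is correct and follows essentially the same route as the paper: apply the circuit rank formula to get $v - c = n$, combine with $v \le n+1$ from $S \subseteq K_{n+1}$ to conclude $v = n+1$ and $c = 1$. The only cosmetic difference is that you derive the lower bound $v \ge n+1$ via a forest-edge count, whereas the paper gets it immediately from $c \ge 1$; your remark about fixing the convention on isolated vertices is a sensible (if minor) point of care.
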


\begin{proof} 
Let $v$ and $c$ denote the number of vertices and connected components of $S$, respectively. 
Since the circuit rank of $S$ is $1$ and $S$ has $n+1$ edges, by Lemma \ref{lem:cr}, we have:
$$v-c=n.$$
In particular, since $c \ge 1$, we must have $v \ge n+1$. 
However, since $S \subseteq  K_{n+1}$, we also have $v \leq n+1$. 
Hence, $v=n+1$ and $c=1$, which implies that $S$ is connected.
\end{proof}

The following is the key ingredient for the proof of Theorem  \ref{thm:main}.
 
  \begin{lemma}
 \label{lem:uni}
 
 Let $S \subseteq  K_{n+1}$ be a subgraph consisting of $n+1$ edges.
If  $S$ contains only one cycle, say $\Gamma= (v_1,\ldots,v_\ell, v_{\ell+1}=v_1)$ of length $\ell \le n+1$ given in terms of its vertex sequence,
then 
$$
\sign(S) =
\{\widetilde{\asc}(\sigma),\widetilde{\dsc}(\sigma)\},
$$
where $\sigma = v_1v_2\cdots v_\ell$ is a permutation in $ \mathfrak{S}_\Gamma$ ($\simeq \mathfrak{S}_{\ell}$) embedded in $\mathfrak{S}_{n+1}$.
\end{lemma}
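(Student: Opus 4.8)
The strategy is to compute each cofactor $d_k = (-1)^k \det(S_k)$ explicitly by understanding which edges of $S$ must be removed to break the unique cycle $\Gamma$, and then track the sign. Since $S$ has $n+1$ edges and circuit rank $1$, by Lemma~\ref{lem:conn} it is unicyclic and connected on all $n+1$ vertices. Removing one edge $\beta_k$ yields $S_k$ with $n$ edges: if $\beta_k$ lies \emph{off} the cycle $\Gamma$, then $S_k$ still contains $\Gamma$, hence is linearly dependent, so $\det(S_k) = 0$ and $d_k = 0$; this is where the constraint $\ell \le n+1$ and the fact that only $\ell$ of the $n+1$ edges lie on $\Gamma$ enters, explaining why the signature only ``sees'' the cycle. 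If instead $\beta_k$ is an edge of $\Gamma$, then $S_k$ is a spanning tree of $K_{n+1}$, so $\det(S_k) = \pm 1$ by Lemma~\ref{lem:-1,0,1} (a spanning tree of $K_{n+1}$ is a basis). So $\sign(S)$ records exactly the signs of the $\ell$ cofactors coming from the cycle edges, and I must show these $\ell$ signs are distributed as the cyclic ascents/descents of $\sigma = v_1 v_2 \cdots v_\ell$.

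**Key steps.** First, by Lemma~\ref{lem:unchanged} the signature is permutation-invariant, so I may reorder $S$ so that the cycle edges come first, say $\beta_1, \dots, \beta_\ell$ are the edges $\{v_j, v_{j+1}\}$ of $\Gamma$ (in cyclic order, indices mod $\ell$), and $\beta_{\ell+1}, \dots, \beta_{n+1}$ are the off-cycle edges; the reordering changes all cofactors by a global sign, which does not affect the \emph{unordered} pair $\{a,b\}$. Second, compute the ratio $d_{k}/d_{k'}$ for two consecutive cycle edges: expanding $\det(S_k)$ and $\det(S_{k+1})$ along the common tree part, one finds $d_{k+1}/d_k$ equals $+1$ or $-1$ according to whether traversing from $v_{k+1}$ to $v_{k+2}$ along $\Gamma$ goes ``up'' or ``down'' relative to the orientation $\epsilon_i - \epsilon_j$ with $i<j$ of the roots — i.e. according to whether position $k$ is a cyclic ascent or cyclic descent of $\sigma$. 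Concretely, the unique (up to scalar) linear dependence among $\beta_1, \dots, \beta_\ell$ is $\sum_{j=1}^{\ell} \pm \beta_j = 0$ where the sign of $\beta_j$ is the sign of $\epsilon_{v_j} - \epsilon_{v_{j+1}}$ as a multiple of a root in $\Phi^+$ (i.e. $+1$ if $v_j < v_{j+1}$, $-1$ if $v_j > v_{j+1}$); by Cramer-type reasoning the cofactors $d_1, \dots, d_\ell$ are, up to a common sign, precisely these coefficients $\pm 1$. Hence the number of $+1$'s among $d_1, \dots, d_\ell$ is the number of cyclic ascents of $\sigma$ and the number of $-1$'s is the number of cyclic descents (or vice versa, depending on the global sign), giving $\sign(S) = \{\widetilde{\asc}(\sigma), \widetilde{\dsc}(\sigma)\}$.

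**Main obstacle.** The delicate point is the sign bookkeeping in step two: relating the cofactor $d_k = (-1)^k \det(S_k)$ to the coefficient of $\beta_k$ in the unique cycle relation, including the global sign and the effect of the $(-1)^k$ factor, and confirming that this identification is consistent around the whole cycle (i.e. that the product of all the consecutive ratios $d_{k+1}/d_k$ around $\Gamma$ is $+1$, which it must be since $\widetilde{\asc}(\sigma)$ and $\widetilde{\dsc}(\sigma)$ are both at least $1$ and the cycle closes up). I would handle this by choosing a convenient ordering of vertices and a reference spanning tree, reducing to a small explicit determinant computation (essentially the incidence-matrix computation for a path), and then invoking Lemma~\ref{lem:unchanged} to transfer the conclusion to the original tuple. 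A clean way to organize it is to first treat the pure cycle case $\ell = n+1$ (so $S = \Gamma$ is itself an $(n+1)$-cycle and every $S_k$ is a path), establish the formula there by the incidence computation, and then note that adding the off-cycle tree edges only introduces zero cofactors and a harmless global sign.
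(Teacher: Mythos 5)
Your proposal is correct and follows essentially the same route as the paper's proof: reorder via Lemma~\ref{lem:unchanged} so the cycle edges come first, note that off-cycle cofactors vanish and on-cycle cofactors are $\pm 1$ (spanning trees), and identify the on-cycle cofactors, up to a common sign, with the coefficients $b_i=\pm1$ of the unique cycle relation $\sum_i b_i\beta_i=0$, which count cyclic ascents and descents of $\sigma$. The paper carries out your ``Cramer-type reasoning'' concretely via the iterated relation $0=\det(b_i\beta_i+b_{i+1}\beta_{i+1},\ldots)=b_id_{i+1}-b_{i+1}d_i$, so $b_i=b_{i+1}$ iff $d_i=d_{i+1}$; your explicit remark that removing a cycle edge leaves a spanning tree (hence $d_k\ne 0$) is a point the paper leaves implicit but actually needs.
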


\begin{proof} 
 For $1 \le i \le \ell$, define the following positive roots in $\Phi^+$: 
 $$
\beta_i :=
\begin{cases}
\epsilon_{v_i}-\epsilon_{v_{i+1}} & \text{if } v_i < v_{i+1}, \\
\epsilon_{v_{i+1}} - \epsilon_{v_i} & \text{if } v_i > v_{i+1}.
\end{cases}
$$
By Lemma \ref{lem:unchanged}, we may assume that $S$ is represented by the tuple $S=(\beta_1,\ldots,\beta_{\ell}, \gamma_{\ell+1},\ldots,\gamma_{n+1})$ for some positive roots $\gamma_j \in \Phi^+$.

 Since $\Gamma= (v_1,\ldots,v_\ell)$ is a cycle, we have the identity: 
  $$(\epsilon_{v_1}-\epsilon_{v_{2}} ) +\cdots +(\epsilon_{v_\ell}-\epsilon_{v_{\ell+1}}) =0,$$
which can be written as
$$b_1\beta_1 +\cdots +b_\ell\beta_\ell=0,$$
where 
 $$
b_i :=
\begin{cases}
1 & \text{if } v_i < v_{i+1}, \\
-1 & \text{if } v_i > v_{i+1}.
\end{cases}
$$
The number of cyclic ascents and descents in $\sigma$ is then given by:
\begin{align*}
\widetilde{\asc}(\sigma)  & =\#\{i \in [\ell] \mid b_i=1 \}, \\
\widetilde{\dsc}(\sigma)  & =\#\{i \in [\ell] \mid b_i=-1\}.
\end{align*}

Recall the notation $d_k$ for the cofactors of $S$ for $1 \le k \le n+1$ in Definition \ref{def:sign}. 
First, observe that since $\beta_1,\ldots,\beta_{\ell}$ are linearly dependent, we must have $d_k=0$ for all $\ell+1 \le k \le n+1$. 
Next, consider the linear dependence of the vectors $b_1\beta_1+b_2\beta_2$ and $\beta_3,\ldots, \beta_\ell$. 
Specifically, we have the following determinant relation:
$$0=\det(b_1\beta_1+b_2\beta_2,\beta_3,\ldots, \beta_\ell) = b_1\det(S_2)+ b_2 \det(S_1) = b_1d_2 -b_2d_1.$$
Therefore, $b_1=b_2$ if and only if $d_1=d_2$. 

By iterating this argument using the linear dependence of \( \beta_1, \ldots, \beta_{i-1}, b_i \beta_i + b_{i+1} \beta_{i+1}, \beta_{i+2}, \ldots, \beta_\ell \) for all \( 1 \leq i \leq \ell-1 \), we can deduce that \( b_i = b_{i+1} \) if and only if \( d_i = d_{i+1} \).
Consequently, we obtain:
$$
\sign(S) =
\{\widetilde{\asc}(\sigma),\widetilde{\dsc}(\sigma)\}.
$$
\end{proof}

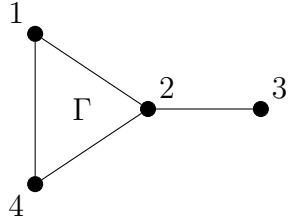
\begin{figure}[htbp]
\centering
\begin{tikzpicture}
\draw (0,0) node[v](1){} node[above right]{$2$};
\draw (1.5,0) node[v](2){} node[above right]{$3$};
\draw (-1.5,1) node[v](3){} node[above left]{$1$};
\draw (-1.5,-1) node[v](4){} node[below left]{$4$};
\draw (-.6,0) node[](5){} node[left]{$\Gamma$};
\draw (2)--(1)--(3)--(4)--(1);
\end{tikzpicture}
\caption{A graphical interpretation of the subset \( S \) in \( A_3 \) from Example \ref{ex:1st}. The graph corresponding to \( S \) is unicyclic, with the cycle \( \Gamma = (1,2,4,1) \) of length 3. The permutation \( \sigma = 124 \in \mathfrak{S}_\Gamma \) (which is isomorphic to \( \mathfrak{S}_3 \)) has \( \widetilde{\asc}(\sigma) = 2 \) and \( \widetilde{\dsc}(\sigma) = 1 \). According to Lemma \ref{lem:uni}, the signature of \( S \) is given by \( \sign(S) = \{1,2\} \), which aligns with the signature calculation presented in Example \ref{ex:1st}.}
\label{fig:graph-1st}
\end{figure}

See Figure \ref{fig:graph-1st} for an example of Lemma \ref{lem:uni}.
We also need the following fact. 
Let $\Part(X)$ denote the set of all partitions of a finite set $X$.
\begin{lemma} 
\label{lem:part} 
Let $n \in \Z_{\ge 0}$ be a nonnegative integer and $x$ be an intermediate variable. 
Then, the following identity holds:
$$\sum_{\lambda \in \Part([n])} x^{\#\lambda}\prod_{B \in \lambda} (\#B)^{\#B-1} = x(x+n)^{n-1}.$$

\end{lemma}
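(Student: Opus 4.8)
The plan is to recognize the left-hand side as a weighted exponential-type sum over set partitions and evaluate it using the exponential formula (equivalently, the compositional structure of exponential generating functions). First I would recall the classical fact that the number of labelled trees on a given set $B$ of size $b$ is $b^{b-2}$ for $b \ge 1$ (Cayley), or rather the more convenient variant: the number of labelled \emph{rooted} trees on $B$ is $b^{b-1}$, so the factor $(\#B)^{\#B-1}$ appearing in the product is exactly the number of rooted forests that are trees on each block, i.e. the number of ways to make $B$ into a rooted tree. Thus the summand $\prod_{B \in \lambda}(\#B)^{\#B-1}$ counts the ways of choosing a rooted tree structure on each block of $\lambda$, and summing over all partitions $\lambda$ of $[n]$ with the extra bookkeeping variable $x^{\#\lambda}$ counts rooted forests on $[n]$ with a weight $x$ per tree (connected component).

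The core step is then to apply the exponential formula: if a class of ``connected'' structures has exponential generating function $C(t) = \sum_{b \ge 1} c_b \, t^b/b!$, then the EGF of disjoint unions of such structures, with a variable $x$ marking the number of components, is $\exp(x\,C(t))$. Here $c_b = b^{b-1}$ (rooted trees on $b$ labelled vertices), and the relevant generating function is the tree function $C(t) = \sum_{b \ge 1} b^{b-1} t^b/b!$, which satisfies $C(t) = t\,e^{C(t)}$. Therefore
$$\sum_{n \ge 0} \left( \sum_{\lambda \in \Part([n])} x^{\#\lambda} \prod_{B \in \lambda}(\#B)^{\#B-1} \right) \frac{t^n}{n!} = \exp\bigl(x\,C(t)\bigr).$$
It remains to extract the coefficient of $t^n/n!$ from $\exp(x\,C(t))$ and show it equals $x(x+n)^{n-1}$. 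This is a known identity; I would prove it via Lagrange inversion applied to $C(t) = t e^{C(t)}$. Specifically, writing $F(t) := \exp(x\,C(t))$ and using the Lagrange--Bürmann formula with $C(t)$ as the inverse series of $t \mapsto t e^{-t}$, one gets
$$[t^n]\exp(x\,C(t)) = \frac{1}{n}[u^{n-1}]\Bigl( x\, e^{xu} \cdot e^{nu} \Bigr) = \frac{x}{n}[u^{n-1}] e^{(x+n)u} = \frac{x}{n} \cdot \frac{(x+n)^{n-1}}{(n-1)!} = \frac{x(x+n)^{n-1}}{n!},$$
which is exactly the claim after multiplying by $n!$. (The $n=0$ case is the trivial identity $1 = x^0$, handled separately since the partition of the empty set is the empty partition.)

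The main obstacle is getting the Lagrange inversion bookkeeping exactly right---in particular correctly identifying which functional equation $C$ satisfies ($C = te^{C}$ versus $C = te^{-C}$, a sign convention that changes $e^{nu}$ to $e^{-nu}$ and flips the sign of $n$ in the answer) and remembering the derivative factor $x\,e^{xu}$ from differentiating $\exp(xC)$. An alternative, more self-contained route that avoids generating functions entirely: prove the identity by induction on $n$, peeling off the block of the partition containing the element $n$ (or, in the forest picture, the tree component containing vertex $n$), which leads to the recurrence $f_n(x) = \sum_{b=1}^{n} \binom{n-1}{b-1} b^{b-1}\, x\, f_{n-b}(x)$ with $f_n(x) = x(x+n)^{n-1}$; verifying this recurrence is then a finite binomial identity (equivalent to Abel's binomial theorem), which could itself be cited or proved directly. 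I would present the generating-function argument as the main proof since it is cleanest, and perhaps remark on the combinatorial forest interpretation.
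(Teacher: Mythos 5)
Your argument is correct, but your primary route is genuinely different from the paper's. The paper's proof is exactly the alternative you sketch at the end: induction on $n$, peeling off the block of a partition of $[n+1]$ containing the element $n+1$, which gives $P(n+1)=x\sum_{k=0}^{n}\binom{n}{k}(n-k+1)^{n-k}\,x(x+k)^{k-1}$ and is then collapsed to $x(x+n+1)^{n}$ by Abel's binomial theorem. Your main proof instead observes that $(\#B)^{\#B-1}$ counts rooted labelled trees on $B$, so the left-hand side is the generating polynomial of rooted forests on $[n]$ with $x$ marking components; the exponential formula gives $\exp(x\,C(t))$ with $C(t)=\sum_{b\ge 1}b^{b-1}t^{b}/b!$ satisfying $C=te^{C}$, and Lagrange--B\"urmann with $\phi(u)=e^{u}$ yields $[t^{n}]e^{xC(t)}=\tfrac{1}{n}[u^{n-1}]\,xe^{xu}e^{nu}=x(x+n)^{n-1}/n!$. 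I checked the bookkeeping you were worried about (the sign convention in $C=te^{C}$, the derivative factor $xe^{xu}$, and the separate $n=0$ case, where division by $n$ in the inversion formula is not available): it is all correct. As for what each approach buys: the paper's induction is elementary and self-contained modulo Abel's identity (which is essentially the same identity in disguise), whereas your route imports Lagrange inversion but exposes the combinatorial content --- the rooted-forest interpretation is precisely how the lemma is deployed in the proof of Theorem \ref{thm:main}, where each block $B$ contributes $(\#B)^{\#B-2}$ trees times $\ell\cdot\#B$ attaching edges and $x$ is specialized to $\ell$; your picture makes that application transparent. Either proof would be acceptable.
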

\begin{proof}
Let $P(n)$ denote the left-hand side of the equation. 
We proceed by an induction on $n$. 
The base case $n=0$ is obvious. 
Let  $\lambda'$ be a partition of $[n+1]$. 
Focus on the block of  $\lambda'$ that contains the element $n+1$. 
Suppose that there are $n-k$  elements, other than $n+1$, that belong to this block. 
We can choose these elements in  ${n \choose n-k}={n \choose k}$ ways. 
The remaining blocks of $\lambda'$, which consist of the remaining $(n+1) -(n-k+1)=k$ elements, form a partition of $[k]$. 
Therefore,
\begin{align*}
P(n+1) & = \sum_{k=0}^n {n \choose k} \sum_{\lambda \in \Part([k])} x^{1+\#\lambda}\prod_{B \in \lambda} (\#B)^{\#B-1}(n-k+1)^{n-k}  \\
& = x\sum_{k=0}^n {n \choose k}(n-k+1)^{n-k}\sum_{\lambda \in \Part([k])} x^{\#\lambda}\prod_{B \in \lambda} (\#B)^{\#B-1} \\
& = x\sum_{k=0}^n {n \choose k}(n-k+1)^{n-k}x (x+k)^{k-1} \\
& = x(x+n+1)^{n}.
\end{align*}
Note that we have applied the induction hypothesis in the third equality and used the Abel's binomial theorem in the last equality.
\end{proof}

\begin{proof}[\tbf{Proof of Theorem  \ref{thm:main}}]
Let $S \subseteq  K_{n+1}$ be a subgraph consisting of $n+1$ edges. 
In particular, $S$ cannot be a forest since $S$ is linearly dependent. 
For given  $1\le a \le b \le n+1$ with $a+b \le n+1$, we need to compute:
$$ s_{a,b}= \#\left\{S \subseteq  K_{n+1} \mid \#S=n+1 \mbox{ and } \sign(S)=\{a,b\}  \right\} .$$

If  $S$ contains more than one cycle, then $ \sign(S) =
\{0,0\}$ since $S \setminus \{\beta\}$ contains a cycle for any $\beta \in S$. 
Thus, we may assume that any subgraph $S$ we are considering contains exactly one cycle. 
Moreover, by Lemma \ref{lem:conn}, $S$ must be connected. 

Let $\ell=a+b$. 
By Lemma \ref{lem:uni}, we may assume that $S$ contains a unique cycle $\Gamma= (v_1,\ldots,v_\ell,$ $v_{\ell+1}=v_1)$ of length $3 \le \ell \le n+1$.
Denote by $\sigma = v_1v_2\cdots v_\ell$ a permutation in $ \mathfrak{S}_\Gamma$ ($\simeq \mathfrak{S}_{\ell}$) embedded in $\mathfrak{S}_{n+1}$. 
We have the following formula:
$$s_{a,b}=\frac{1}{2\ell} {n+1 \choose \ell} \#\left\{\sigma \in \mathfrak{S}_{\ell} \mid \widetilde{\dsc}(\sigma) \in \{a,b\}  \right\}\sum_{\lambda \in \Part([n+1-\ell])} \prod_{B \in \lambda} \ell\cdot(\#B)^{\#B-1}.$$

Let us explain why. 
By Lemma \ref{lem:uni}, we can transform the problem of counting subgraphs \( S \) into a problem of counting permutations \( \sigma \). However, we must take into account the potential repetition of terms.
There are ${n+1 \choose \ell}$ ways to choose the vertices of the cycle $\Gamma$. 
A circular shift of a permutation preserves cyclic descents (and cyclic ascents), and reversal of a permutation swaps cyclic descents and ascents. 
Hence, for a given cycle $\Gamma$, there are $2\ell$ distinct permutations  $\sigma$ of $\Gamma$.

Next, because $S$  is connected, we need to count the number of forests of vertex set 
$[n+1-\ell]$  where each component is attached to $\Gamma$. 
For a given partition 
$\lambda \in \Part([n+1-\ell])$, each block
 $B$  in the partition corresponds to a tree, and there are 
 $(\#B)^{\#B-2}$ labeled trees on 
$B$ by Cayley's formula. 
The cycle 
$\Gamma$  is connected to each tree by an edge 
$e$, where one endpoint is from 
$\Gamma$ and the other is from 
$B$. There are 
$\ell\cdot\#B$  ways to choose such an edge for each block 
$B\in\lambda$. 
See Figure \ref{fig:illus} for a graphical illustration.

By applying  Lemma \ref{lem:part} with $\ell$ and $n+1-\ell$ in place of $x$ and $n$, respectively we obtain: 
$$s_{a,b}=\frac{1}{2} {n+1 \choose \ell} (n+1)^{n-\ell} \#\left\{\sigma \in \mathfrak{S}_{\ell} \mid \widetilde{\dsc}(\sigma) \in \{a,b\}  \right\}.$$

If $a=b=1$, then $\ell=2$ and obviously $s_{1,1}=0$. If $a=b>1$, apply Proposition \ref{prop:cc} to obtain:
$$
s_{a,a} =\frac{1}{2} {n+1 \choose \ell} (n+1)^{n-\ell} \widetilde{\left\langle {\ell \atop a}\right\rangle} = \frac{1}{2} {n\choose \ell-1} (n+1)^{n+1-\ell}  \left\langle {\ell-1 \atop a-1}\right\rangle.
$$

If $a<b$, we compute: 
\begin{align*}
s_{a,b} & =\frac{1}{2} {n+1 \choose \ell} (n+1)^{n-\ell}\left( \widetilde{\left\langle {\ell \atop a}\right\rangle} +  \widetilde{\left\langle {\ell \atop b}\right\rangle} \right) \\
& =\frac{1}{2} {n+1 \choose \ell} (n+1)^{n-\ell}\left(\left\langle {\ell-1 \atop a-1}\right\rangle +  \left\langle {\ell-1 \atop b-1}\right\rangle\right) \\
& = {n\choose \ell-1} (n+1)^{n+1-\ell}  \left\langle {\ell-1 \atop a-1}\right\rangle.
\end{align*}
Note that we have applied the palindromicity of Eulerian numbers in the last equality. 
This completes the proof of the main theorem.
\end{proof}

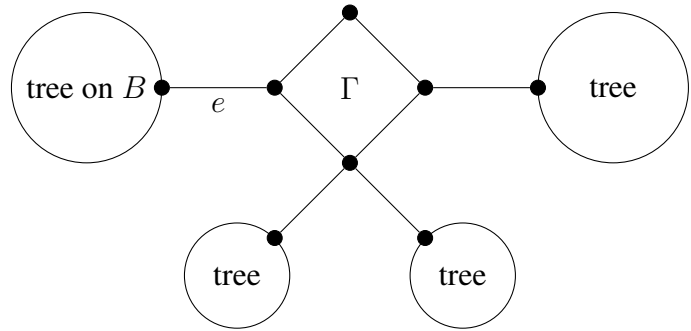
\begin{figure}[htbp!]
\centering
\begin{tikzpicture}
\draw (0,0) node[v](1){}  ;
\draw (1.5,0) node[v](2){} ;
\draw (-1,1) node[v](3){}  ;
\draw (-1,-1) node[v](4){};
\draw (-1,0) node[](5){} node[]{$\Gamma$};
\draw (-2,0) node[v](6){} ;

\draw (-3.5,0) node[v](11){} ;
    \draw (6) edge["$e$"] (11);

    \draw (-4.5,0) node[](12){} node[]{tree on $B$};
\draw (-4.5,0) circle (1);

\draw (-2.5,-2.5) node[](10){} node[]{tree};
\draw (-2.5,-2.5) circle (0.7);

\draw (2.5,0) node[](8){} node[]{tree};
\draw (2.5,0) circle (1);

\draw (0,-2) node[v](7){} ;

\draw (-2,-2) node[v](13){} ;
\draw (4)--(13);

\draw (.5,-2.5) node[](14){} node[]{tree};
\draw (.5,-2.5) circle (0.7);

\draw (4)--(7);

\draw (2)--(1)--(3)--(6)--(4)--(1);
\end{tikzpicture}
\caption{A graphical illustration of the proof of Theorem  \ref{thm:main}.}
\label{fig:illus}
\end{figure}

\begin{example}
If $n=6$, then
$$ (s_{a,b})_{1\le a\le b \le n+1, \, a+b\le n+1} =
\begin{pmatrix}
   0 & 36015 & 6860 & 735 & 42 & \:\:1 &   \\
     & 13720 & 8085 & 1092 & 57 &   &   \\
     &   & 1386 & 302 &   &   &
\end{pmatrix}.
$$
\end{example}

 \section{Signatures and bases}
 \label{sec:SB}
 
 In this section, we provide the proof of Theorem \ref{thm:app2}. 
 Let $\A$ be a matrix, and let $\mathsf{B}(\A)$ denote the set of all bases of $\A$. 
 Recall that the absolute value of the determinant of a base $B \in \mathsf{B}(\A)$ equals its arithmetic multiplicity, i.e., $e(B) = |\det(B)| > 0$. By definition: 
\begin{align*}
T_{\A} (1, 1) &  = \#\B(\A), \\
T_{\A}^{\arith}(1, 1) & = \sum_{B \in \B(\A)} e(B).
\end{align*}

Let $\A^E_n$ be an $E$-deformation of the root system $\Phi = A_n$. 
We aim to count the number of bases and compute the arithmetic multiplicity of each base of $\A^E_n$. 
Recall that a column in $\A^E_n$ is either $(0 \cdots 0 \, 1)^T$ or has the form 
$\begin{pmatrix} \beta \ -x \end{pmatrix}^T$ for some $\beta \in \Phi^+$ and $x \in E_j$. 
We define the submatrix of $\A^E_n$ containing the roots in $\Phi^+$ as the \textbf{root part} of $\A^E_n$.
Let $B$ be a base in $\A^E_n$. 
We note that $r(B) = \#B = r(\A^E_n) = n + 1$. We now consider the following cases:

\begin{enumerate}[{Case} 1.]
\item\label{C1} $B$ contains $(0\cdots 0 \, 1)^T$. In this case, $B$ can be written as:
$$
B = \begin{pmatrix}
X & \begin{matrix} 0 \\ \vdots \\ 0 \end{matrix} \\
\begin{matrix} -x_1 & \cdots & -x_n \end{matrix}  & 1
\end{pmatrix}
$$
for some subset $X \subseteq \Phi^+$. By Lemma \ref{lem:-1,0,1}, $e(B) = |\det(X)| = 1$. 
In particular, $X$ is linearly independent, and $\#X = n$.

\item\label{C2} $B$ does not contain $(0\cdots 0 \, 1)^T$ and $B$ has a duplicated column in the root part of $\A^E_n$. 
In this case, $B$ has the form:
$$
B = \begin{pmatrix}
Y & \beta &  \beta\\
\begin{matrix}-x_1&  \cdots &-x_{n-1} \end{matrix}  & -x_{n} & -x_{n+1}
\end{pmatrix}
$$
for some $\beta \in \Phi^+$ and $Y \subseteq \Phi^+$. 
Again by Lemma \ref{lem:-1,0,1}, we have:
$$e(B) = |\det(Y\, \beta)| \cdot |x_n - x_{n+1}|= |x_n - x_{n+1}|.$$ 
In particular, $Y \cup \{\beta\}$ is linearly independent and $\#(Y \cup \{\beta\})=n$.

\item\label{C3} $B$ does not contain $(0\cdots 0 \, 1)^T$   and $B$ does not have duplicated columns in the root part of $\A^E_n$. 
In this case, we define the signature of the root part of $B$. 
Assume the signature is $\{a,b\}$, where $1 \le a \le b \le n+1$ and $a + b \le n+1$. 
We also assume $a + b \ge 3$ since $s_{1,1} = 0$ by Theorem \ref{thm:main}. 
Applying Laplace expansion along the last row of $B$, we obtain:
$$e(B) = \left| \sum_{i=1}^a x_i - \sum_{i=a+1}^{a+b} x_i \right|.$$
\end{enumerate}

We will apply  the calculations above to  an $[\ell,m]$-deformation of $\Phi$. 
Before proceeding, we need some enumerative results.

\begin{lemma}
\label{lem:delta}

Let $\Phi = A_n$. 
The number of linearly independent subsets of $\Phi^+$ of cardinality $n$ is given by
$$\delta_n = (n+1)^{n-1}.$$
\end{lemma}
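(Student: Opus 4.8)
The plan is to interpret linearly independent subsets of $\Phi^+$ of size $n$ graph-theoretically, exactly as in the proof of Theorem \ref{thm:main}. Identifying $\Phi^+$ with the complete graph $K_{n+1}$ on vertex set $[n+1]$, a subset $S \subseteq \Phi^+$ of cardinality $n$ is linearly independent if and only if the corresponding subgraph contains no cycle, i.e., is a forest with $n$ edges on $n+1$ vertices. By Lemma \ref{lem:cr} (circuit rank), a forest with $n$ edges and $n+1$ vertices has $c = (n+1) - n = 1$ connected component, so it is precisely a spanning tree of $K_{n+1}$. Hence $\delta_n$ counts the spanning trees of $K_{n+1}$.

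The key step is then simply Cayley's formula: the number of labeled trees on $n+1$ vertices is $(n+1)^{n-1}$. This gives $\delta_n = (n+1)^{n-1}$ directly. Alternatively, to keep the section self-contained and consistent with the generating-function machinery already developed, one can derive this from Lemma \ref{lem:part}: a spanning forest structure decomposes according to the partition of the vertex set into the vertex sets of its trees, and the identity $\sum_{\lambda \in \Part([n+1])} x^{\#\lambda} \prod_{B \in \lambda}(\#B)^{\#B-1} = x(x+n+1)^{n}$ specialized appropriately recovers Cayley's count; but invoking Cayley's formula directly is cleanest.

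There is essentially no obstacle here: the only thing to be careful about is the translation between the root-system description (where $\Phi$ lives in $U \simeq \R^n$ with simple roots $\Delta$) and the graph description (where $\Phi^+$ is identified with edges of $K_{n+1}$). This identification, and the fact that linear independence of a subset of positive roots corresponds to acyclicity of the associated subgraph, have already been recorded in the excerpt (see the discussion preceding Lemma \ref{lem:cr}), so I would just cite it. Thus the proof is short: reduce to counting spanning trees of $K_{n+1}$ via Lemma \ref{lem:cr}, then apply Cayley's formula.

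\begin{proof}[Proof sketch]
Identify $\Phi^+$ with the edge set of the complete graph $K_{n+1}$ on vertex set $[n+1]$, so that a subset $S \subseteq \Phi^+$ with $\#S = n$ is linearly independent if and only if the corresponding subgraph of $K_{n+1}$ contains no cycle. Such a subgraph has $n$ edges; viewing it as a graph on all $n+1$ vertices of $K_{n+1}$, Lemma \ref{lem:cr} gives circuit rank $0 = n - (n+1) + c$, hence $c = 1$, so the subgraph is a spanning tree of $K_{n+1}$. Conversely every spanning tree of $K_{n+1}$ arises this way. Therefore the number of linearly independent subsets of $\Phi^+$ of cardinality $n$ equals the number of spanning trees of $K_{n+1}$, which is $(n+1)^{n-1}$ by Cayley's formula.
\end{proof}
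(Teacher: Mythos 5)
Your proof is correct and follows essentially the same route as the paper's: identify $\Phi^+$ with the edges of $K_{n+1}$, use the circuit-rank formula (Lemma \ref{lem:cr}) to show a linearly independent $n$-subset is a spanning tree, and conclude by Cayley's formula. The only cosmetic difference is that the paper deduces $v=n+1$ and $c=1$ from $v-c=n$ together with $v\le n+1$, whereas you fix the vertex set to be all of $[n+1]$ from the start; both are fine.
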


\begin{proof} 

Let $S \subseteq \Phi^+$ be an independent set with $\#S = n$. 
We have seen in \S\ref{sec:SE} that $S$ can be regarded as a subgraph of the complete graph $K_{n+1}$ on $n+1$ vertices. 
Since $S$ is linearly independent, it forms a forest. 
By Lemma \ref{lem:cr}, we have 
$$v - c = n,$$ where $v$ and $c$ are the numbers of vertices and connected components of $S$, respectively. 
Since $S \subseteq K_{n+1}$, we must have $v = n+1$ and $c = 1$. 
Therefore, $S$ is a connected tree, and the number of such subsets is equal to the number of labeled trees on $n+1$ vertices, which by Cayley's formula is:
$$\delta_n = (n+1)^{n-1}.$$

\end{proof}

\begin{lemma}[e.g., \cite{Mur81}] 
\label{lem:sol} 
Let $k$ be an integer and consider the linear equation in $n$ variables:
$x_1,x_2,\ldots,x_n$
$$x_1+x_2+\cdots+x_n = k.$$
Then the number of solutions to this equation in range $[d]$ for a fixed positive integer $d$ is given by
$$
\tau_{n,k,d} =
\sum_{i=0}^{ \lfloor  \frac{k-n}d \rfloor} (-1)^i {n \choose i} {k-id-1 \choose n-1} .
$$
In particular, the equation  has a solution when $n \le k \le nd$. 
\end{lemma}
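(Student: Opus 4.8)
The plan is to reduce the count to a bounded composition problem and apply inclusion--exclusion. First I would substitute $y_i := x_i - 1$ for each $i$, which sets up a bijection between solutions of $x_1 + \cdots + x_n = k$ with every $x_i \in [d]$ and solutions of $y_1 + \cdots + y_n = k - n$ with every $y_i \in \{0, 1, \ldots, d-1\}$. Ignoring the upper bounds, the number of non-negative integer solutions of $y_1 + \cdots + y_n = k-n$ is the classical stars-and-bars count $\binom{(k-n)+(n-1)}{n-1} = \binom{k-1}{n-1}$, with the usual convention that $\binom{m}{n-1} = 0$ whenever $m < n-1$.

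Next I would impose the constraints $y_i \le d-1$ by inclusion--exclusion over the "bad" events $A_i = \{y_i \ge d\}$. For a fixed subset $I \subseteq [n]$ of size $j$, the number of solutions with $y_i \ge d$ for all $i \in I$ is computed by the further substitution $z_i := y_i - d$ for $i \in I$ (and $z_i := y_i$ otherwise), which gives $z_1 + \cdots + z_n = k - n - jd$ in non-negative integers, hence $\binom{k - jd - 1}{n-1}$ solutions. Since there are $\binom{n}{j}$ subsets of size $j$, inclusion--exclusion yields that the desired count equals
$$
\sum_{j \ge 0} (-1)^j \binom{n}{j} \binom{k - jd - 1}{n-1}.
$$
Finally, I would observe that $\binom{k - jd - 1}{n-1}$ vanishes as soon as $k - jd - 1 < n - 1$, i.e. as soon as $j > \frac{k-n}{d}$, so the sum truncates at $j = \lfloor \frac{k-n}{d} \rfloor$; this recovers exactly the stated formula for $\tau_{n,k,d}$.

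For the final clause, I would note that a solution exists precisely when $n \le k \le nd$: the all-ones tuple has coordinate sum $n$ and the all-$d$'s tuple has sum $nd$, and changing one coordinate by $1$ changes the sum by $1$, so every integer in $[n, nd]$ is realized, while no tuple in $[d]^n$ can sum to less than $n$ or more than $nd$. There is no genuine obstacle in this argument; the only point needing a little care is the bookkeeping of the truncation index together with the convention $\binom{m}{n-1} = 0$ for $m < n-1$, which is what makes the finite sum in the statement agree with the formally unbounded inclusion--exclusion sum.
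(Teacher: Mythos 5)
Your proof is correct and is the standard stars-and-bars plus inclusion--exclusion argument; the paper itself gives no proof of this lemma, simply citing Murty's note, and your derivation is exactly the classical one found there. The only point worth flagging is that you correctly pin down the convention $\binom{m}{n-1}=0$ for $m<n-1$, which is what justifies truncating the sum at $\lfloor (k-n)/d\rfloor$.
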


\begin{corollary} 
\label{cor:sol} 
Let $\ell, m, a, b, k$ be integers such that $\ell \le m$ and $a,b,k \ge 1$. 
Define $k_1 := k + a(1-\ell) + b(m+1)$, $k_2 := k + b(1-\ell) + a(m+1)$, and $d := m - \ell + 1$. Consider the equation:
$$\left| \sum_{i=1}^a x_i - \sum_{i=a+1}^{a+b} x_i \right|= k.$$
Then the number of integer solutions to this equation in $[\ell, m]$ is:
$$\tau_{a+b,k_1,d}+\tau_{a+b,k_2,d}.$$
In particular, the equation has a solution when either (i) $a\ell-bm \le k \le am-b\ell$,  or (ii) $ b\ell-am\le k \le bm-a\ell$. 
\end{corollary}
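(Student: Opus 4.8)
The plan is to reduce the absolute-value equation to two ordinary linear equations and then apply Lemma \ref{lem:sol} to each. First I would set $y_i := x_i - \ell + 1$ for $1 \le i \le a+b$, so that $x_i \in [\ell, m]$ if and only if $y_i \in [1, d]$ with $d = m - \ell + 1$; this is the natural change of variables that turns a range $[\ell,m]$ into the range $[d]$ required by Lemma \ref{lem:sol}. Under this substitution, $\sum_{i=1}^a x_i - \sum_{i=a+1}^{a+b} x_i = \sum_{i=1}^a y_i - \sum_{i=a+1}^{a+b} y_i + a(\ell-1) - b(\ell-1) = \sum_{i=1}^a y_i - \sum_{i=a+1}^{a+b} y_i + (a-b)(\ell - 1)$. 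A cleaner route is to observe that the equation $\left| \sum_{i=1}^a x_i - \sum_{i=a+1}^{a+b} x_i \right| = k$ with $k \ge 1$ splits into the disjoint union of the two cases $\sum_{i=1}^a x_i - \sum_{i=a+1}^{a+b} x_i = k$ and $\sum_{i=1}^a x_i - \sum_{i=a+1}^{a+b} x_i = -k$; these are genuinely disjoint because $k \neq 0$.

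Next I would handle each case by flipping the sign of the subtracted block. Writing $x_i \in [\ell, m]$ is equivalent to $z_i := m + 1 - x_i \in [1, d]$. In the first case, substituting $y_i = x_i - \ell + 1$ for $i \le a$ and $z_i = m + 1 - x_i$ for $i > a$ converts $\sum_{i=1}^a x_i - \sum_{i=a+1}^{a+b} x_i = k$ into $\sum_{i=1}^a y_i + \sum_{i=a+1}^{a+b} z_i = k + a(1-\ell) + b(m+1) = k_1$, an equation in $a+b$ variables each ranging over $[d]$; by Lemma \ref{lem:sol} the number of solutions is $\tau_{a+b, k_1, d}$. Symmetrically, in the second case one uses $z_i = m+1-x_i$ for $i \le a$ and $y_i = x_i - \ell + 1$ for $i > a$, obtaining $\sum y + \sum z = k + b(1-\ell) + a(m+1) = k_2$, with $\tau_{a+b, k_2, d}$ solutions. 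Adding the two disjoint counts gives the claimed total $\tau_{a+b,k_1,d} + \tau_{a+b,k_2,d}$.

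Finally, the solvability criterion follows from the ``in particular'' clause of Lemma \ref{lem:sol}: the first equation has a solution precisely when $a+b \le k_1 \le (a+b)d$, and unwinding $k_1 = k + a(1-\ell) + b(m+1)$ and $d = m-\ell+1$ rearranges (after cancellation) to $a\ell - bm \le k \le am - b\ell$; the second gives $b\ell - am \le k \le bm - a\ell$, and the equation is solvable iff at least one of these holds. I do not anticipate a real obstacle here — the only thing to be careful about is bookkeeping with the two substitutions and checking the index arithmetic in the exponents $k_1, k_2$ matches the constants introduced by shifting each of the $a$ ``positive'' and $b$ ``negative'' variables; the disjointness of the two sign cases (which needs $k \ge 1$, as hypothesized) is what makes the final sum, rather than an inclusion–exclusion, correct.
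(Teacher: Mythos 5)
Your proposal is correct and follows essentially the same route as the paper: split the absolute-value equation into the two disjoint sign cases (disjoint because $k \ge 1$), apply the mixed substitution $y_i = x_i + 1 - \ell$ on the positive block and $y_i = m + 1 - x_i$ on the negative block to reduce each case to $y_1 + \cdots + y_{a+b} = k_1$ (resp.\ $k_2$) over $[d]$, and invoke Lemma \ref{lem:sol}. The solvability ranges also unwind exactly as in the paper's argument, so there is nothing to add.
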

\begin{proof}
First, consider the equation:
\begin{equation}
\label{eq:1} \sum_{i=1}^a x_i - \sum_{i=a+1}^{a+b} x_i= k.
\end{equation}
Define $y_i := x_i+1-\ell$ for $1 \le i \le a$ and  $y_i := -x_i+m+1$ for $a+1 \le i \le a+b$. 
Then $1 \le y_i \le d$ for all $i$. 
Moreover, Equation \eqref{eq:1} over  $[\ell, m]$ is equivalent to the following equation over  $[d]$:
 $$y_1+y_2+\cdots+y_{a+b} = k_1.$$
 Therefore, by Lemma \ref{lem:sol}, the number of solutions to Equation \eqref{eq:1} in   $[\ell, m]$ is $\tau_{a+b,k_1,d}$. 
Similarly, consider the equation:
\begin{equation}
\label{eq:2} \sum_{i=a+1}^{a+b} x_i - \sum_{i=1}^a x_i = k.
\end{equation}
This equation has $\tau_{a+b,k_2,d}$ solutions in   $[\ell, m]$. 
Since $k \ne 0$,  Equations \eqref{eq:1} and \eqref{eq:2} have no solutions in common. 
The main assertion follows. 

If  (i) $a\ell-bm \le k \le am-b\ell$ occurs, then $ a+b \le k_1 \le (a+b)d$. 
If  (ii) $a\ell-bm \le k \le am-b\ell$ occurs, then $ a+b \le k_2 \le (a+b)d$. 
By Lemma \ref{lem:sol}, the equation has a solution if either of the above cases occurs.
\end{proof}

\begin{proof}[\tbf{Proof of Theorem  \ref{thm:app2}}]

For Case \ref{C1}, by Lemma \ref{lem:delta}, there are $\delta_n$ ways to choose matrix $X$. Thus, this case contributes $\delta_n$ to both the classical and arithmetic Tutte polynomials.

For Case \ref{C2}, by Lemma \ref{lem:delta}, there are $\delta_n$ ways to choose matrix $(Y\, \beta)$. 
There are $n$ ways to choose $\beta$ and $d^{n-1}$ ways to choose the numbers $x_1, \ldots, x_{n-1}$. 
Note that $e(B)$ takes all and only values in $[m-\ell]$. 
If $e(B) = k$ for some $1 \le k \le m-\ell$, there are $d - k$ ways to choose an unordered pair $\{x_n, x_{n+1}\}$ from $[\ell, m]$ such that $|x_n - x_{n+1}| = k$. 
This explains the second term in the formulas.

For Case \ref{C3}, by Theorem \ref{thm:main}, there are $s_{a,b}$ ways to choose the root part of $B$. 
There are $d^{n+1-a-b}$ ways to choose the numbers $x_i$s other than $x_1, x_2, \ldots, x_{a+b}$. 
We can show that $e(B)$ takes values in $[mn - \ell]$  (but not necessarily all). 
This is true because the following inequalities hold
$$\ell-mn \le a\ell-bm \le \sum_{i=1}^a x_i - \sum_{i=a+1}^{a+b} x_i \le am-b\ell \le mn-\ell$$
under the conditions that $|\ell| \le m$, $1\le a \le b \le n+1$ and $a+b \le n+1$. 
Let us show, for example, the rightmost inequality 
$$am-b\ell \le (n+1-b)m-b\ell = mn + (1-b)(m+\ell) -\ell \le mn-\ell.$$
If $e(B) = k$ for some $1 \le k \le mn - \ell$, by Corollary \ref{cor:sol}, there are $\tau_{a+b,k_1,d} + \tau_{a+b,k_2,d}$ ways to choose a tuple $(x_1, x_2, \ldots, x_{a+b})$ from $[\ell, m]$ such that:
$$ \left| \sum_{i=1}^a x_i - \sum_{i=a+1}^{a+b} x_i \right| = k.$$
This explains the third term in the formulas.

\end{proof}

 \begin{remark}
 \label{rem:C23}
 In Case \ref{C2}, we only need to count the number of unordered pairs $\{x_n , x_{n+1}\}$  since the root $\beta$ is duplicated. 
 In Case \ref{C3}, we need to count all possible tuples $(x_1, \ldots, x_{a+b})$ since the roots in the root part are mutually distinct.
 
\end{remark}

 \section{Signatures and periods}
 \label{sec:SP}
 
 A function \( \varphi: \Z \to \C \) is called a \textbf{quasi-polynomial} if there exist a positive integer \( \rho \in \Z_{>0} \) and polynomials \( f^k(t) \in \Q[t] \) for \( 1 \le k \le \rho \) such that for any \( q \in \Z_{>0} \) with \( q \equiv k \pmod{\rho} \), we have:

\[
\varphi(q) = f^k(q).
\]

The number \( \rho \) is called a \textbf{period}, and the polynomial \( f^k(t) \) is called the \textbf{\( k \)-constituent} of the quasi-polynomial \( \varphi \). The smallest such \( \rho \) is called the \textbf{minimum period} of the quasi-polynomial \( \varphi \). The minimum period is necessarily a divisor of any period.

Let \( n, p \in \Z_{>0} \) be positive integers. Let \( \Cca = (c_1, \dots, c_p) \in \Mat_{n \times p}(\Z) \) be an integral matrix with all nonzero columns \( c_j \) for \( 1 \le j \le p \), and let \( b = (b_1, \dots, b_p) \in \Z^p \). Consider the augmented matrix:
\[
\A := \begin{pmatrix} \Cca \\ \hline b \end{pmatrix} \in \Mat_{(n + 1) \times p}(\Z).
\]

We call \( \Cca \) the \textbf{homogeneous part} of \( \A \). The matrix \( \A \) defines the following \textbf{integral} hyperplane arrangement in \( \R^n \):
\[
\As = \As(\A) := \{ H_j \mid 1 \le j \le n \},
\]
where
\[
H_j = H_{c_j} := \{ x \in \R^n \mid x c_j = b_j \}.
\]

We call the case \( b = 0 \) the \textbf{central case}, and the case \( b \neq 0 \) the \textbf{non-central case}. In the central case, we usually omit the vector \( b \) and identify \( \A = \Cca \). The horizontal line separating \( \Cca \) and \( b \) is used to distinguish the central and non-central cases.

In what follows, we assume that \( \Cca \) is \textbf{primitive}, i.e., each column \( c_j = (c_{1j} \dots c_{nj})^T \) of \( \Cca \) is primitive, meaning that \( \gcd\{c_{1j}, \dots, c_{nj}\} = 1 \). In particular, \( \A \) is also primitive. Thus, an integral arrangement \( \As \) is associated with a unique matrix \( \A \).

The \textbf{cone} \( \cc \As \) of \( \As \) is a \textit{central} arrangement in \( \R^{n+1} \) defined by
\[
\cc \As := \{ \cc H_j \mid 1 \le j \le n \} \cup {H_{\infty}},
\]
where
\[
\cc H_j := \left\{ x \in \R^{n+1} \mid x \begin{pmatrix} c_j \\ -b_j \end{pmatrix} = 0 \right\}, \quad H_{\infty} := \{ x \in \R^{n+1} \mid x_{n+1} = 0 \}.
\]

In other words, \( \cc \As \) is the central integral arrangement defined by the matrix (without the horizontal separating line)

\[
\A^E := \begin{pmatrix} \Cca & \begin{matrix} 0 \\ \vdots \\ 0 \end{matrix} \\ -b & 1 \end{pmatrix}.
\]

Let \( q \in \Z_{>0} \) and denote \( \Z_q := \Z / q \Z \). For \( a \in \Z \), let \( [a]_q := a + q \Z \in \Z_q \) denote the \( q \)-reduction of \( a \). For a matrix \( A' \) with integral entries, denote by \( [A']_q \) the entry-wise \( q \)-reduction of \( A' \). The \textbf{\( q \)-reduction} \( \As_q \) of \( \As \) is defined by
\[
\As_q = \As_q(\A) := \{ H_{j,q} \mid 1 \le j \le n \},
\]
where
\[
H_{j,q} := \{ z \in \Z_q^n \mid z [c_j]_q = [b_j]_q \}.
\]

Denote \( \Z_q^{\times} := \Z_q \setminus \{0\} \). The complement \( \M(\As_q) \) of \( \As_q \) is defined by

\[
\M(\As_q) := \Z_q^n \setminus \bigcup_{j=1}^p H_{j,q} = \{ z \in \Z_q^n \mid z [c_j]_q - [b_j]_q \in (\Z_q^{\times})^n \}.
\]

For $\emptyset \ne J \subseteq [p]$, let $\Cca_J \in \Mat_{n\times \#J}(\Z)$ denote the submatrix of $\Cca$ consisting of the columns indexed by $J$.
Let $r(J):=r(\Cca_J)$ denote the rank of $\Cca_J$.
Let $0<e_{J,1} \mid e_{J,2} \mid \cdots \mid e_{J,r(J)}$ be the elementary divisors of $\Cca_J$.
The \tbf{lcm period} of $\Cca$ is defined by
$$\rho_\Cca:= \lcm \{ e_{J,r(J)} \mid \emptyset \ne J \subseteq [p] \}.$$
 
\begin{theorem}[{\cite[Theorem 3.1]{KTT11}}]
\label{thm:KTT}
There exists a number $q_0 = q_0(\A) \in \Z_{>0}$ depending on the matrix $\A$ and a monic quasi-polynomial $\chi^{\quasi}_{\A}(q)$ with period $\rho_{\Cca}$ such that $\#\M(\As_q)  = \chi^{\quasi}_{\A}(q)$ for all $q>q_0$. 
This quasi-polynomial is called the \tbf{characteristic quasi-polynomial} of $\A$ (or of $\As$). 

\end{theorem}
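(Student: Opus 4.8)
The statement to prove is Theorem \ref{thm:KTT}, the existence of the characteristic quasi-polynomial with period equal to the lcm period $\rho_{\Cca}$. Since the excerpt explicitly attributes this to \cite[Theorem 3.1]{KTT11}, the ``proof'' is really a recollection of the Kamiya--Takemura--Terao argument; I will describe that strategy.

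The plan is to count $\#\M(\As_q)$ by a stratification according to which linear equations are active, then apply Ehrhart-type quasi-polynomiality to each stratum. First I would fix $q \in \Z_{>0}$ and, using inclusion--exclusion over the subsets $J \subseteq [p]$ of hyperplanes, write
\[
\#\M(\As_q) = \sum_{J \subseteq [p]} (-1)^{\#J} N_J(q),
\]
where $N_J(q)$ is the number of $z \in \Z_q^n$ satisfying $z[c_j]_q = [b_j]_q$ simultaneously for all $j \in J$ (with $N_\emptyset(q) = q^n$). So it suffices to show each $N_J(q)$ is, for $q$ large, a quasi-polynomial in $q$ whose minimum period divides $e_{J, r(J)}$; the lcm of these periods over all $J$ then bounds the period of the alternating sum. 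The key sub-step is a Smith normal form computation: writing $\Cca_J = P D Q$ with $P \in \GL_n(\Z)$, $Q \in \GL_{\#J}(\Z)$, and $D$ the diagonal matrix of elementary divisors $e_{J,1} \mid \cdots \mid e_{J,r(J)}$, the system $z[\Cca_J]_q = [b_J]_q$ becomes, after the unimodular change of coordinates $w = zP$, a decoupled system $w_i e_{J,i} \equiv b'_i \pmod q$ for $1 \le i \le r(J)$ (with $w_{r(J)+1}, \dots, w_n$ free). The number of solutions of $w_i e_{J,i} \equiv b'_i \pmod q$ is $\gcd(e_{J,i}, q)$ if $\gcd(e_{J,i},q) \mid b'_i$ and $0$ otherwise; since $e_{J,i} \mid e_{J,r(J)}$, the function $q \mapsto \gcd(e_{J,i}, q)$ is periodic in $q$ with period $e_{J,r(J)}$, and for $q$ large the divisibility condition $\gcd(e_{J,i},q) \mid b'_i$ also stabilizes to a periodic condition. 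Hence $N_J(q) = q^{n - r(J)} \prod_{i=1}^{r(J)} (\text{periodic function of } q)$, a quasi-polynomial with period $e_{J,r(J)}$ for $q > q_0(J)$ for some threshold $q_0(J)$; take $q_0 = \max_J q_0(J)$.

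Having assembled $\#\M(\As_q)$ as an alternating sum of such quasi-polynomials, I would then argue: (1) monic-ness and degree $n$ — the $J = \emptyset$ term contributes $q^n$, and every other term has degree $n - r(J) < n$, so the top-degree term is $q^n$; (2) the period divides $\rho_{\Cca} = \lcm\{e_{J, r(J)}\}$ by construction. The more delicate point, which is the reason the theorem asserts only that $\rho_{\Cca}$ is \emph{a} period (minimality is the content of the later \cite{HTY23} result quoted in the introduction), is that the gcd-based constituents genuinely realize this period and do not collapse; but for the existence statement as phrased here, one only needs the divisibility, which is immediate.

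The main obstacle is bookkeeping the threshold $q_0$ and verifying that the ``divisibility conditions'' $\gcd(e_{J,i},q) \mid b'_i$ become eventually periodic rather than merely eventually constant: one checks that $\gcd(e_{J,i},q) \mid b'_i$ depends on $q$ only through $\gcd(e_{J,i}, q)$, hence only through $q \bmod e_{J,i}$, hence is periodic with period dividing $e_{J,r(J)}$ for \emph{all} $q$, not just large $q$ — so in fact no threshold is needed for periodicity, and $q_0$ is only there to ensure the combined expression agrees with the naive point-count (there are no accidental coincidences among the reduced hyperplanes). I would close by remarking that this is precisely \cite[Theorem 3.1]{KTT11}, and that the identification of $\rho_{\Cca}$ with the minimum period $\rho_{\A}$ is the theorem of \cite{HTY23} invoked in the introduction, which we will use freely in \S\ref{sec:SP}.
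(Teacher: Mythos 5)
Your reconstruction is correct and is essentially the argument of the cited source \cite[Theorem 3.1]{KTT11}; the paper itself offers no proof of this statement, quoting it as an external result. The inclusion--exclusion over $J\subseteq[p]$, the Smith normal form reduction $w_ie_{J,i}\equiv b_i'\pmod q$, the count $\gcd(e_{J,i},q)$ with its period dividing $e_{J,r(J)}$, and the observation that only the $J=\emptyset$ term has degree $n$ are exactly the standard Kamiya--Takemura--Terao proof (the threshold $q_0$ arising, as you note, from the eventually-constant consistency conditions in the non-central case).
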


The name ``characteristic quasi-polynomial" is inspired by the fact that $\chi^{\quasi}_{\A}(q)$ is a generalization of the \tbf{characteristic polynomial}  $\chi_\As( t)$ (e.g., \cite[Definition 2.52]{OT92}) of $\As$.
\begin{theorem}[{e.g., \cite[Remark 3.3]{KTT11}}]
\label{thm:KTT11} 
Let \( f_\A^k(t) \in \Z[t] \) for \( 1 \le k \le \rho_\Cca \) denote the \( k \)-constituent of \( \chi^{\quasi}_{\A}(q) \). The following holds:
$$f^1_{\A}(t)=\chi_{\As}(t).$$
\end{theorem}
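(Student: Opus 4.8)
\textbf{Proof plan for Theorem \ref{thm:KTT11}.}
The statement to establish is that the first constituent $f^1_\A(t)$ of the characteristic quasi-polynomial $\chi^{\quasi}_\A(q)$ coincides with the ordinary characteristic polynomial $\chi_{\As}(t)$ of the arrangement $\As$. The plan is to compare the two sides via their natural combinatorial descriptions: the constituent counts points of the complement $\M(\As_q)$ modulo $q$ for $q \equiv 1 \pmod{\rho_\Cca}$, while $\chi_{\As}(t)$ is defined through the M\"obius function of the intersection poset. The unifying tool is the inclusion–exclusion (Crapo–Rota / finite-field-type) expansion of both quantities over subsets $J \subseteq [p]$ of the columns.

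First I would write, for each $q$, the count $\#\M(\As_q)$ by inclusion–exclusion over the hyperplanes:
\[
\#\M(\As_q) = \sum_{J \subseteq [p]} (-1)^{\#J}\, \#\{ z \in \Z_q^n \mid z[c_j]_q = [b_j]_q \text{ for all } j \in J \}.
\]
For a fixed $J$, the number of solutions $z$ of the linear system over $\Z_q$ is governed by the Smith normal form of $\Cca_J$: it is nonzero precisely when $[b_j]_q$ lies in the appropriate image lattice, and when nonzero it equals $q^{n - r(J)}\prod_{i=1}^{r(J)} e_{J,i}$ times a correction coming from the solvability conditions. The key simplification is that when $q \equiv 1 \pmod{\rho_\Cca}$, every $e_{J,i}$ divides $\rho_\Cca$ hence is invertible-compatible with $q$ in the relevant sense, so $\gcd(e_{J,i}, q)$ collapses and the solution count for each consistent system becomes exactly $q^{n-r(J)}$, independent of the elementary divisors; moreover for $q > q_0$ the consistency condition reduces to the condition that defines which $J$ contribute to the characteristic polynomial, namely that the affine subspace $\bigcap_{j\in J} H_j$ is nonempty in $\R^n$. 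Substituting $t = q$ then matches term-by-term the Whitney/Crapo–Rota expansion $\chi_{\As}(t) = \sum_{J} (-1)^{\#J} t^{n - r(J)}$ (restricted to central $J$, i.e. those with consistent intersection), which is exactly $\chi_{\As}(t)$.

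The main obstacle I anticipate is the careful bookkeeping of the solvability (consistency) conditions for the congruence systems: over $\Z_q$ a system $z\,\Cca_J = b_J$ may be inconsistent even though its real counterpart is consistent, and vice versa, and controlling this for all $J$ simultaneously is precisely why one needs $q$ large (to kill spurious real inconsistencies coming from bounded denominators) \emph{and} $q \equiv 1 \pmod{\rho_\Cca}$ (to kill the modular obstructions coming from nontrivial elementary divisors). Since Theorem \ref{thm:KTT} already packages the existence of the quasi-polynomial with period $\rho_\Cca$ and $q_0$, I would lean on it to guarantee that, once I verify the term-by-term matching for a single residue $q \equiv 1 \pmod{\rho_\Cca}$ with $q > q_0$, the polynomial identity $f^1_\A(t) = \chi_{\As}(t)$ follows (two polynomials agreeing on an infinite arithmetic progression are equal). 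Thus the crux is the local computation at one such $q$, reducing the modular point count of $\M(\As_q)$ to the signed sum $\sum_J (-1)^{\#J} q^{n-r(J)}$ over the consistent $J$, and then invoking the classical Whitney-rank generating function formula for $\chi_{\As}(t)$.
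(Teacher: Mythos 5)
Your plan is correct and is exactly the standard argument: inclusion–exclusion over column subsets $J$, Smith normal form giving $q^{n-r(J)}\prod_i\gcd(e_{J,i},q)$ solutions for consistent systems, the collapse of all gcd factors to $1$ when $q\equiv 1\pmod{\rho_\Cca}$, identification of mod-$q$ consistency with real consistency for $q>q_0$, and Whitney's expansion of $\chi_{\As}(t)$. The paper itself gives no proof of this statement — it only cites \cite[Remark 3.3]{KTT11} — and your argument is essentially the one in that reference (your intermediate phrase ``$q^{n-r(J)}\prod_i e_{J,i}$ times a correction'' should read $q^{n-r(J)}\prod_i\gcd(e_{J,i},q)$, but you use the correct collapsed form in the end, so this is only a slip of wording).
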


\begin{theorem}[{\cite[Theorem 1.2]{HTY23}}]
\label{thm:HTY} 
The minimum period of the characteristic quasi-polynomial of a central integral arrangement equals its lcm period.
\end{theorem}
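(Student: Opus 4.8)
Write $\rho$ for the minimum period of $\chi^{\quasi}_\A(q)$ and $\rho_\Cca$ for its lcm period. The plan is to prove the two divisibilities $\rho \mid \rho_\Cca$ and $\rho_\Cca \mid \rho$ separately. The first is immediate from Theorem \ref{thm:KTT}: there $\rho_\Cca$ is exhibited as a period of $\chi^{\quasi}_\A$, and the minimum period divides every period, so $\rho \mid \rho_\Cca$. All the work goes into the reverse divisibility $\rho_\Cca \mid \rho$.

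First I would make the constituents explicit. In the central case $\M(\As_q) = \{z \in \Z_q^n \mid z[c_j]_q \ne 0,\ 1 \le j \le p\}$, so inclusion--exclusion over the columns gives $\chi^{\quasi}_\A(q) = \sum_{J \subseteq [p]} (-1)^{\#J} N_J(q)$, where $N_J(q) = \#\{z \in \Z_q^n \mid z\Cca_J \equiv 0 \pmod q\}$. Reducing $\Cca_J$ to Smith normal form with elementary divisors $e_{J,1} \mid \cdots \mid e_{J,r(J)}$ yields $N_J(q) = q^{\,n-r(J)} \prod_{i=1}^{r(J)} \gcd(e_{J,i}, q)$. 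Since each $e_{J,i}$ divides $\rho_\Cca$, the $k$-constituent is the polynomial
\[
f^k_\A(t) = \sum_{J \subseteq [p]} (-1)^{\#J} \Big( \prod_{i=1}^{r(J)} \gcd(e_{J,i}, k) \Big)\, t^{\,n-r(J)},
\]
with each $\gcd(e_{J,i}, k)$ read modulo $\rho_\Cca$; two constituents coincide iff all of these coefficients agree.

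Next I would reduce to a prime-by-prime statement. The periods of a quasi-polynomial are exactly the multiples of its minimum period, so a divisor of $\rho_\Cca$ is a period iff it is a multiple of $\rho$; hence $\rho = \rho_\Cca$ iff, for every prime $p \mid \rho_\Cca$, the integer $\rho_\Cca/p$ fails to be a period. Fix such a $p$, let $v_\ell$ denote the $\ell$-adic valuation, and set $s := v_p(\rho_\Cca)$. Using $\gcd(e_{J,i}, k) = \prod_\ell \ell^{\min(v_\ell(e_{J,i}), v_\ell(k))}$, I would choose $k'$ coprime to every prime $\ell \ne p$ dividing $\rho_\Cca$ with $v_p(k') = s-1$, and set $k := k' + \rho_\Cca/p$, picking the $p$-unit digit of $k'$ so that a carry makes $v_p(k) \ge s$. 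Then $k \equiv k' \pmod{\ell^{v_\ell(\rho_\Cca)}}$ for every $\ell \ne p$, so the prime-to-$p$ factors of the two products agree and equal $1$, and a direct computation gives
\[
f^k_\A(t) - f^{k'}_\A(t) = \sum_{J :\, v_p(e_{J,r(J)}) = s} (-1)^{\#J} (p^{m_J} - 1)\, \phi_J\, t^{\,n-r(J)},
\]
where $m_J := \#\{i \mid v_p(e_{J,i}) = s\} \ge 1$ and $\phi_J := \prod_{i=1}^{r(J)} p^{\min(v_p(e_{J,i}),\, s-1)}$. Since $k \equiv k' \pmod{\rho_\Cca/p}$, showing this difference is a nonzero polynomial proves that $\rho_\Cca/p$ is not a period.

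The main obstacle is precisely this non-vanishing: a priori the alternating contributions of the submatrices with $p^s \mid e_{J,r(J)}$ could cancel. My plan is to isolate a single non-cancelling contribution. A natural candidate is the top-degree term, i.e. the coefficient of $t^{\,n-r^\ast}$ with $r^\ast := \min\{r(J) \mid v_p(e_{J,r(J)}) = s\}$, which cannot be cancelled by any higher-degree term; to control it I would study an inclusion-minimal submatrix $\Cca_{J_0}$ realizing $v_p(e_{J_0,r(J_0)}) = s$. Minimality should force the columns of $\Cca_{J_0}$ to be linearly independent and concentrate the valuation $s$ in essentially one maximal minor, via a deletion argument on the matroid of $\Cca$ together with the identity $e_{J,1} \cdots e_{J,i} = \gcd\{i \times i \text{ minors of } \Cca_J\}$, so that no other subset of the same rank produces a matching term and the coefficient reduces to a nonzero multiple of $p^{\,s-1}(p-1)$. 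I expect this non-cancellation lemma---controlling the interplay between the matroid of $\Cca$ and the $p$-adic sizes of its minors---to be the technically demanding step; the remainder is bookkeeping with the Smith normal form and the $p$-adic valuation.
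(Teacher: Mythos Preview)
The paper does not prove Theorem~\ref{thm:HTY}; it is quoted verbatim from \cite{HTY23} and used as a black box. There is therefore no ``paper's own proof'' to compare your proposal against.

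On its own merits, your setup is sound: the inclusion--exclusion/Smith normal form expression for the constituents is correct, the reduction to showing that $\rho_\Cca/p$ fails to be a period for each prime $p\mid\rho_\Cca$ is the right move, and your choice of $k,k'$ does isolate exactly the subsets $J$ with $v_p(e_{J,r(J)})=s$ in the difference $f^{k}_\A-f^{k'}_\A$. The gap is precisely where you say it is, and your sketched resolution does not close it. Inclusion-minimality of $J_0$ does force its columns to be independent (removing a redundant column leaves the $\Z$-span, hence the Smith form, unchanged), so $\#J_0=r(J_0)=:r^\ast$; but this does \emph{not} make $J_0$ the unique contributor of rank $r^\ast$. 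There can be many independent $J$ of rank $r^\ast$ with $v_p(e_{J,r^\ast})=s$, and there can also be dependent $J$ with $r(J)=r^\ast$ and $\#J>r^\ast$ satisfying the same condition (indeed, since for $J'\subseteq J$ with $r(J')=r(J)$ one has $e_{J,r^\ast}\mid e_{J',r^\ast}$, every maximal independent subset of such a dependent $J$ again has $v_p(e_{\cdot,r^\ast})=s$). The signs $(-1)^{\#J}$ then genuinely alternate within rank $r^\ast$, and your phrase ``concentrate the valuation $s$ in essentially one maximal minor'' does not rule out these competing terms. In short, the non-cancellation lemma you identify as ``the technically demanding step'' is not merely bookkeeping; it needs a real argument (for instance a M\"obius-type grouping over flats, or a direct evaluation of $\chi^{\quasi}_\A$ at two specific integers rather than a comparison of polynomial coefficients), and as written the proposal stops short of one.
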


 \begin{remark} 
\label{rem:C}
We have seen that the lcm period of the characteristic quasi-polynomial in the central or non-central case depends only on the homogeneous part. The characteristic polynomial of a non-central arrangement \( \As \) and that of its cone \( \cc \As \) are known to be related by the simple formula:
\[
\chi_{\cc \As}(t) = (t-1) \chi_{\As}(t).
\]
However, their characteristic quasi-polynomials or minimum periods are very different in general. For example, a type \( A \) root system has minimum period 1, while an arbitrary deformation of the root system may have a very large minimum period, as Theorem \ref{thm:app1} illustrates.
\end{remark}

The following upper bound for the lcm period will be useful later. For \( \emptyset \neq J \subseteq [p] \), recall that the arithmetic multiplicity \( e(\Cca_J) \) of \( \Cca_J \) is defined by
\[
e(\Cca_J) = \prod_{j=1}^{r(J)} e_{J,j},
\]
the product of the elementary divisors of \( \Cca_J \).

\begin{proposition}[{\cite[Remark 2.3]{HTY23}, \cite[Lemma 5.2]{BM14},}] 
\label{prop:ub}
Let $\Cca \in \Mat_{n\times p}(\Z)$ be an integral matrix with all nonzero columns. Then 
$$\mu_\Cca  := \lcm \{ e(\Cca_J) \mid\emptyset \ne J \subseteq [p],\, \Cca_J \text{ is a base of } \Cca \}.$$
is a period of $\chi^{\quasi}_{\Cca}(q)$. In particular, $\rho_{\Cca} \le\mu_\Cca$.
\end{proposition}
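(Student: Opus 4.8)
The plan is to show that $\mu_\Cca$ is a period of $\chi^{\quasi}_\Cca(q)$ by exhibiting $\mu_\Cca$ as a common multiple of all the $e(\Cca_J)$ over all nonempty $J$, not just those indexing bases, and then invoking the fact that the lcm of the top elementary divisors $\rho_\Cca$ divides $\mu_\Cca$. First I would recall the elementary divisor interpolation: for any $\emptyset \ne J \subseteq [p]$, the largest elementary divisor $e_{J,r(J)}$ divides $e(\Cca_J) = \prod_{j=1}^{r(J)} e_{J,j}$ trivially, since it is one of the factors in the product. So if I can prove that $e(\Cca_J)$ divides $\mu_\Cca$ for \emph{every} nonempty $J$, then $e_{J,r(J)} \mid \mu_\Cca$ for every $J$, hence $\rho_\Cca = \lcm_J\{e_{J,r(J)}\} \mid \mu_\Cca$, and since $\rho_\Cca$ is the minimum period (Theorem \ref{thm:HTY}) and the minimum period divides any period, it would suffice instead to show directly that $\mu_\Cca$ is \emph{a} period — but the cleanest route is to bound $\rho_\Cca$ by $\mu_\Cca$ via divisibility and then note every period-multiple is a period.

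The key step is therefore the claim: for each nonempty $J \subseteq [p]$, there is a base $\Cca_{J'}$ of $\Cca$ with $J' \subseteq J$ (or at least with $J'$ chosen from $[p]$) such that $e(\Cca_J)$ divides $e(\Cca_{J'})$. For this I would pass to a maximal linearly independent subset $J_0 \subseteq J$, so $\Cca_{J_0}$ has the same rank $r(J)$ as $\Cca_J$ and the same column span over $\Q$; by the standard theory of the Smith normal form, restricting to a spanning independent subset can only shrink (divide) the elementary divisors, so $e(\Cca_{J_0}) \mid e(\Cca_J)$ — wait, the inequality in fact goes the direction that a submatrix's elementary divisors divide those of the larger matrix of the same rank is false in general; the correct and needed statement is the reverse, that the lattice generated by the columns of $\Cca_J$ contains that of $\Cca_{J_0}$ with finite index, giving $e(\Cca_{J_0}) \mid e(\Cca_J)$. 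To then reach a base of all of $\Cca$ (rank $n$), I would extend $J_0$ to a maximal independent subset $J'$ of $[p]$; passing from $\Cca_{J_0}$ to the base $\Cca_{J'}$ of the full rank-$n$ column space again multiplies the arithmetic multiplicity by an integer factor via the index of the sublattice, so $e(\Cca_{J_0}) \mid e(\Cca_{J'})$. Chaining the divisibilities and taking lcm over all $J$ yields $\rho_\Cca \mid \mu_\Cca$, and hence $\rho_\Cca \le \mu_\Cca$; moreover $\mu_\Cca$, being a multiple of the minimum period $\rho_\Cca$, is itself a period.

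The main obstacle is getting the divisibility directions exactly right: the naive guess that ``submatrix divides matrix'' for elementary divisors is the wrong one, and the argument must be phrased in terms of inclusions of the integer column lattices (equivalently, via Cramer's rule expressing columns of $\Cca_J$ as integer combinations divided by subdeterminants of $\Cca_{J'}$). Concretely, I would use: if $L \subseteq L'$ are full-rank sublattices of $\Z^n$ with $L = \Z\Cca_{J_0}$, $L' = \Z\Cca_{J'}$, then the product of elementary divisors of a generating matrix of $L$ equals $[\Z^n : L]$ up to the saturation, and $[\Z^n:L]$ is a multiple of $[\Z^n:L']$ — so the arithmetic multiplicity only grows as we shrink the generating set within a fixed rational span. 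Once this lattice-index bookkeeping is pinned down, the rest is formal: take lcm, cite Theorem \ref{thm:HTY} for $\rho_\Cca$ being the minimum period, and conclude both $\rho_\Cca \le \mu_\Cca$ and that $\mu_\Cca$ is a period. (In the write-up I would likely just cite \cite[Lemma 5.2]{BM14} for the lattice-index inequality and \cite[Remark 2.3]{HTY23} for the period statement, keeping the proof to a few lines.)
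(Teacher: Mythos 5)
The paper offers no proof of this proposition at all --- it is quoted from \cite{HTY23} and \cite{BM14} --- so your proposal is being measured against the standard argument in those references rather than against anything in the text. Your overall architecture is the right one and matches that argument: show $e_{J,r(J)} \mid \mu_\Cca$ for every nonempty $J$ by chaining divisibilities through a maximal independent subset $J_0 \subseteq J$ and then a base $J' \supseteq J_0$ of $\Cca$, conclude $\rho_\Cca \mid \mu_\Cca$, and finish by noting that any positive integer multiple of a period is again a period (Theorem \ref{thm:KTT} already gives that $\rho_\Cca$ is a period, so Theorem \ref{thm:HTY} is not even needed for the first assertion; the divisibility itself then gives $\rho_\Cca \le \mu_\Cca$).

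The genuine problem is that the one divisibility your chain hinges on is stated backwards, twice. You write $e(\Cca_{J_0}) \mid e(\Cca_J)$ for $J_0 \subseteq J$ a maximal independent subset. The true statement is $e(\Cca_J) \mid e(\Cca_{J_0})$: since $\Z\Cca_{J_0} \subseteq \Z\Cca_J$ are finite-index sublattices of the common saturation $L := \Q\Cca_J \cap \Z^n$, one has $e(\Cca_{J_0}) = [L : \Z\Cca_{J_0}] = [L : \Z\Cca_J]\cdot[\Z\Cca_J : \Z\Cca_{J_0}] = e(\Cca_J)\cdot[\Z\Cca_J : \Z\Cca_{J_0}]$; that is, the arithmetic multiplicity \emph{grows} multiplicatively when you shrink the generating set within a fixed rational span --- exactly the heuristic you state correctly in words in your final paragraph, in contradiction with the displayed divisibility. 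With the direction as written, the chain $e_{J,r(J)} \mid e(\Cca_J)$, $e(\Cca_{J_0}) \mid e(\Cca_J)$, $e(\Cca_{J_0}) \mid e(\Cca_{J'})$ does not let you conclude $e_{J,r(J)} \mid e(\Cca_{J'})$, so the proof does not close. Once corrected to $e_{J,r(J)} \mid e(\Cca_J) \mid e(\Cca_{J_0}) \mid e(\Cca_{J'}) \mid \mu_\Cca$ (the step for extending an independent set to a base you do state in the correct direction; it follows from the injection of $\mathrm{sat}(\Z\Cca_{J_0})/\Z\Cca_{J_0}$ into $\mathrm{sat}(\Z\Cca_{J'})/\Z\Cca_{J'}$ obtained by adding one independent column at a time), the argument is complete. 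A minor further slip: $\Z\Cca_{J_0}$ is not a full-rank sublattice of $\Z^n$ unless $r(J) = n$, so the index bookkeeping must be carried out inside the saturation, not inside $\Z^n$ as your last paragraph suggests.
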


\begin{proof}[\tbf{Proof of Theorem  \ref{thm:app1}}]
 By Theorem \ref{thm:HTY}, it suffices to compute the lcm periods of these arrangements. First, we prove the assertion for \( \A_n^{[\ell,m]} \). Let us focus on Case \ref{C3} in the proof of Theorem \ref{thm:app2}. Recall that if \( B \in \B(\A_n^{[\ell,m]}) \) is a base, then \( e(B) \) takes values in \( [mn-\ell] \) but not necessarily all of them.

 Let \( B' \in \B(\A_n^{[\ell,m]}) \) be a base such that its root part has signature \( \{1,n\} \). In fact, there is only one subset of the root system with signature \( \{1,n\} \) since \( s_{1,n} = 1 \) by Theorem \ref{thm:main}. It is not hard to see that this subset consists of all the simple roots and the highest root. Thus, the base \( B' \) is given by
$$
B'=
\begin{pmatrix}
    1 & \cdots & 0&  1 \\
    \vdots & \ddots & \vdots    & \vdots \\
   0 & \cdots & 1 &   1 \\
      -x_1 & \cdots & -x_n & -x_{n+1} 
             \end{pmatrix}.$$
             for some $x_1,\ldots,x_n, x_{n+1} \in [\ell,m]$.

             Note that \( e(B') = |x_1 + \cdots + x_n - x_{n+1}| \). Moreover, by Corollary \ref{cor:sol} together with the condition \( m+1 \ge n\ell \), the equation \( |x_1 + \cdots + x_n - x_{n+1}| = k \) for \( 1 \le k \le mn-\ell \) always has a solution. Therefore, \( e(B') \) takes all and only values in \( [mn-\ell] \). In particular,
 $$ \{ e(B) \mid  B \in \B( \A_n^{[\ell,m]}) \} =[mn-\ell].$$
Hence, the period defined in Proposition \ref{prop:ub} is given by
$$\mu_{\A_n^{[\ell,m]}} = \lcm\{1,2,\ldots,mn-\ell\}.$$

Furthermore, using row reduction, one can show that the base \( B' \) has elementary divisors $1, \dots,$ $1,$ $e(B')$. This implies that
 $$\rho_{\A_n^{[\ell,m]}}  \ge \lcm\{1,2,\ldots,mn-\ell\}.$$
 By Proposition \ref{prop:ub}, we must have 
 $$\rho_{\A_n^{[\ell,m]}}  = \lcm\{1,2,\ldots,mn-\ell\}.$$

Next, we prove the assertion for \( \cc \Ish_n \). Recall that
$$
 \cc\Ish_n= \begin{pmatrix}
\Phi^+ & \alpha_{1,j} &\begin{matrix} 0 \\ \vdots \\ 0 \end{matrix} \\
0 & -x & 1
\end{pmatrix},
$$
where $\alpha_{1,j}=\epsilon_1-\epsilon_j =  \sum_{k=1}^{j-1} \alpha_k$ for $2 \le  j \le n+1$ and $1 \le x \le j-1$.
Identify the positive system $\Phi^+$ of $\Phi$ with the edges of the complete graph $K_{n+1}$ with vertex set $[n+1]$ as in \S\ref{sec:SE}. 

Let $B \in \B(  \cc\Ish_n)$ be a base that satisfies Case \ref{C3}, i.e., $B$ does not contain $(0\cdots 0 \, 1)^T$ and the root part $S$ of $B$ does not have duplicate columns. 
Denote $\sign(S) = \{a,b\}$ for $1\le a \le b \le n+1$ with $3 \le a+b \le n+1$. 
By Lemma \ref{lem:uni} (and the beginning of the proof of Theorem  \ref{thm:main}), $S$ contains a unique cycle $\Gamma= (v_1,\ldots,v_{a+b}, v_{a+b+1}=v_1)$ of length $ a+b $ and 
$$
\sign(S) =
\{\widetilde{\asc}(\sigma),\widetilde{\dsc}(\sigma)\},
$$
where $\sigma = v_1v_2\cdots v_{a+b} \in \mathfrak{S}_{a+b}$. 

Therefore,
$$e(B) = \left| \sum_{i=1}^a x_i - \sum_{i=a+1}^{a+b} x_i \right|,$$
where $0 \le x_1,\ldots,x_{a+b} \le n$ are the parameters assigned to the edges of $\Gamma$. 
Note that the root $\alpha_{1,j} =\epsilon_1-\epsilon_j$ corresponds to the edge $\{1,j\}$. 
Thus, $S$ cannot have more than two roots of the form $\alpha_{1,j}$ since  $\Gamma$ is a cycle that cannot have more than two edges of the form $\{1,j\}$. 

If $S$ has only one root of the form $\alpha_{1,j}$, then $e(B)=|x|$ for some $0 \le x \le n$. 
If $S$ has two roots of that form, say $\alpha_{1,j_1}$ and $\alpha_{1,j_2}$, then $\Gamma= (1,j_1,j_2)$. 
Therefore, $\sign(S) = \{1,2\}$ and $e(B)=|x_{j_1}-x_{j_2}|$ for some $0 \le x_{j_1}, x_{j_2} \le n$. 
By combining these results with Cases \ref{C1} and \ref{C2}, we deduce that 
 $$ \{ e(B) \mid  B \in \B( \cc\Ish_n) \} =[n].$$
Hence, the period defined in Proposition \ref{prop:ub} is given by 
 $$\mu_{\cc\Ish_n}  = \lcm\{1,2,\ldots,n\}.$$
 
Now, consider a base \( B'' \in \B(\cc \Ish_n) \) of the form
  $$
B''=
\begin{pmatrix}
    1 & \cdots & 0&  1 \\
    \vdots & \ddots & \vdots    & \vdots \\
   0 & \cdots & 1 &   1 \\
      0 & \cdots & 0 & -x 
             \end{pmatrix}$$
             for some $1 \le x \le n$. 
             It is easily seen that $B''$ has elementary divisors $1,\ldots,1,e(B'')=|x|$. 
 This implies that  
 $$\rho_{ \cc\Ish_n}  \ge \lcm\{1,2,\ldots,n\}.$$
 By Proposition \ref{prop:ub}, we conclude that 
 $$\rho_{\cc\Ish_n}  = \lcm\{1,2,\ldots,n\}.$$
\end{proof}

\begin{remark} 
\label{rem:SI}
By Theorem \ref{thm:app1}, \( \cc\Shi_n^{[0,1]} \) and \( \cc\Ish_n \) share the same minimum period for every \( n \). However, they have different characteristic quasi-polynomials. Specifically, one may compute
\[
\chi^{\quasi}_{\cc\Shi_2^{[0,1]}}(q) =
\begin{cases}
(q-1)(q-3)^2 & \text{if } q \equiv 1 \pmod{2}, \\
(q-2)(q^2-5q+5) & \text{if } q \equiv 2 \pmod{2}.
\end{cases}
\]
and
\[
\chi^{\quasi}_{\cc\Ish_2}(q) =
\begin{cases}
(q-1)(q-3)^2 & \text{if } q \equiv 1 \pmod{2}, \\
(q-2)(q-3)^2 & \text{if } q \equiv 2 \pmod{2}.
\end{cases}
\]
\end{remark}

\vskip 1em
\noindent
\textbf{Acknowledgments.} 
The authors thank Maddalena Pismataro for helpful discussions.

\bibliographystyle{abbrv}

\end{document}